\definecolor{wwhhii}{rgb}{1.,1.,1.}
\definecolor{rreedd}{rgb}{1.,0.,0.}
\definecolor{uuuuuu}{rgb}{0.26666666666666666,0.26666666666666666,0.26666666666666666}
\definecolor{darkgreen}{HTML}{0d8513}
\newtheorem{theorem}{Theorem}[section]
\newtheorem{lemma}[theorem]{Lemma}
\newtheorem{prop}[theorem]{Proposition}
\theoremstyle{definition}
\newtheorem{rem}[theorem]{Remark}
\DeclareMathOperator{\Exp}{Exp}
\DeclareMathOperator{\TV}{TV}
\DeclareMathOperator{\GOE}{GOE}
\DeclareMathOperator{\Ai}{Ai}
\DeclareMathOperator{\pr}{prob}
\newcommand{\PP}{\mathbb P}
\newcommand{\Z}{\mathbb Z}
\newcommand{\R}{\mathbb R}
\newcommand{\N}{\mathbb N}
\newcommand{\don}{\mathds{1}}
\newcommand{\cD}{\mathcal D}
\newcommand{\cP}{\mathcal P}
\newcommand{\cB}{\mathcal B}
\newcommand{\cE}{\mathcal E}
\newcommand{\cA}{\mathcal A}
\newcommand{\cM}{\mathcal M}
\newcommand{\cEo}{\dot{\cE}}
\newcommand{\cEd}{\ddot{\cE}}
\newcommand{\cW}{\mathcal W}
\newcommand{\ocE}{\overline{\cE}}
\newcommand{\cH}{\mathcal H}
\newcommand{\cQ}{\mathcal Q}
\newcommand{\fh}{\mathfrak h}
\newcommand{\ii}{\mathfrak i}
\newcommand{\etao}{\dot{\zeta}}
\newcommand{\etad}{\ddot{\zeta}}
\newcommand{\bxi}{\boldsymbol{\xi}}
\newcommand{\bzeta}{\boldsymbol{\zeta}}
\newcommand{\btad}{\boldsymbol{\etad}}
\newcommand{\btao}{\boldsymbol{\etao}}
\newcommand{\bta}{\boldsymbol{\eta}}
\newcommand{\bla}{\boldsymbol{\lambda}}
\newcommand{\uetao}{\underline{\etao}}
\newcommand{\uetad}{\underline{\etad}}
\newcommand{\oxi}{\overline{\xi}}
\newcommand{\obxi}{\overline{\bxi}}
\newcommand{\hT}{\hat{T}}
\newcommand\doeq{\mathrel{\stackrel{\makebox[0pt]{\mbox{\normalfont\tiny d}}}{=}}}
\renewcommand\tableofcontents{%
  \null\hfill\textbf{\Large\contentsname}\hfill\null\par
  \@mkboth{\MakeUppercase\contentsname}{\MakeUppercase\contentsname}%
  \@starttoc{toc}%
}
\g@addto@macro\normalsize{%
  \setlength\abovedisplayskip{5pt}
  \setlength\belowdisplayskip{5pt}
  \setlength\abovedisplayshortskip{3pt}
  \setlength\belowdisplayshortskip{3pt}
}
\numberwithin{equation}{section}
\begin{document}
\title{Cutoff profile of the Metropolis biased card shuffling}

\author{Lingfu Zhang
\thanks{Department of Statistics, University of California, Berkeley. e-mail: lfzhang@berkeley.edu}
}
\date{}

\maketitle

\begin{abstract}
We consider the Metropolis biased card shuffling (also called the multi-species ASEP on a finite interval or the random Metropolis scan).
Its convergence to stationary was believed to exhibit a total-variation cutoff, and that was proved a few years ago by Labb{\'e} and Lacoin \cite{labbe2019cutoff}.
In this paper, we prove that (for $N$ cards) the cutoff window is in the order of $N^{1/3}$, and the cutoff profile is given by the GOE Tracy-Widom distribution function.
This confirms a conjecture by Bufetov and Nejjar \cite{bufetov2022cutoff}. Our approach is different from \cite{labbe2019cutoff}, by comparing the card shuffling with the multi-species ASEP on $\Z$, and using Hecke algebra and recent ASEP shift-invariance and convergence results.
Our result can also be viewed as a generalization of the Oriented Swap Process finishing time convergence \cite{bufetov2022absorbing}, which is the TASEP version (of our result).
\end{abstract}

\section{Introduction} \label{sec:intro}
In this paper we study the \emph{(Metropolis) biased card shuffling} of size $N$, which can be viewed as a continuous time Markov chain with state space being the permutation group $S_N$.
This Markov chain can be described as follows.
Let $[N]$ denote the set $\{1,\ldots,N\}$, and let elements in $S_N$ be represented by bijections from $[N]$ to itself.
Suppose that the current state is $\lambda:[N]\to[N]$, then for each $i\in[N-1]$ independently, if $\lambda(i)<\lambda(i+1)$, with rate $1$ we swap $\lambda(i)$ and $\lambda(i+1)$; otherwise, with rate $q$ we swap $\lambda(i)$ and $\lambda(i+1)$.
Here $q\in [0,1)$ is a constant.
This Markov chain is known to have a stationary measure, $\cM_N$, which is also called the \emph{Mallows measure} of size $N$.
We let $\cW_{N,t}^\lambda$ be the law of this chain at time $t$, starting from a deterministic state $\lambda$.
Then $\cW_{N,t}^\lambda\to \cM_N$ as $t\to\infty$.
Our main result concerns the convergence of $\cW_{N,t}^\lambda$ to $\cM_N$ in the total-variation distance, which is defined as
\[
\|\cP-\cQ\|_{\TV}=\max_{A\subset S_N}|\cP(A)-\cQ(A)|
\]
for any two probability measures $\cP$ and $\cQ$ of $S_N$.
\begin{theorem}  \label{thm:main}
For any $\tau \in \R$, we have
\[
\lim_{N\to\infty}\max_{\lambda\in S_N} \|\cW_{N,2(1-q)^{-1}(N+\tau N^{1/3})}^\lambda-\cM_N\|_{\TV} = 1-F_{\GOE}(2^{2/3}\tau),
\]
where $F_{\GOE}$ is the distribution function of the Tracy-Widom GOE distribution.
\end{theorem}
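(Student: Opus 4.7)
The plan is to reduce the TV-distance estimate to a distributional limit for a carefully chosen observable of the dynamics, and then identify that limit as $F_{\GOE}(2^{2/3}\tau)$ by KPZ-scale ASEP asymptotics. My first step is to argue that the maximum over $\lambda$ is essentially attained at the reverse permutation $\lambda_0=(N,N-1,\ldots,1)$: using a standard monotone coupling for the biased shuffling with respect to the height-function partial order, the TV distance from $\cW_{N,t}^{\lambda_0}$ to $\cM_N$ dominates that from any other starting state up to a vanishing error. Having fixed this worst-case initial condition, I reinterpret the shuffling as the multi-species ASEP on $[N]$ (species-$i$ particle starting at site $i$) and couple it with the multi-species ASEP on $\Z$ with step initial data; at the timescale $t \asymp N$, information from the boundary propagates at $O(1)$ speed, so at bulk-scale we may work on $\Z$.

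The second step, which is the crux, is to express the TV distance as a tractable functional of the multi-species dynamics. The Mallows measure $\cM_N$ can itself be realised as a scaling limit of a stationary multi-species ASEP, so $\|\cW_{N,t}^{\lambda_0}-\cM_N\|_{\TV}$ is governed by how well the colored particles have relaxed towards their stationary joint distribution. Exploiting color-position symmetry (the Hecke-algebra symmetrisation that commutes with the ASEP generator), I reduce this to the joint law of the single-species height functions $\{h_t^{(k)}\}_{k\in[N]}$ obtained by merging colors $\le k$ into particles and $>k$ into holes. The event ``not yet mixed at threshold $k$'' is monotone in $k$, so the relevant TV distance is asymptotically dictated by a maximum (over $k$) of these height functions evaluated near the characteristic location.

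The third step assembles the scaling limit. I invoke Borodin-Gorin-Wheeler shift-invariance to realise the joint law of $\{h_t^{(k)}\}_k$ as the law of a single single-species height function sampled along a family of space-time points on a characteristic direction. Applying the ASEP-to-KPZ-fixed-point convergence (Quastel-Sarkar, Virág) at the window $t=2(1-q)^{-1}(N+\tau N^{1/3})$, the sampled field converges to the parabolic Airy$_2$ process, and the $\max_k$ observable converges to $\max_{x\in\R}(\cA_2(x)-x^2)$. By Johansson's identity this maximum has distribution $F_{\GOE}$ after the $2^{2/3}$ rescaling, yielding the stated limit.

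The hard step will be the second: turning the TV distance into an observable that is captured \emph{exactly} in the limit, not merely bounded on one side. The lower bound needs an explicit event $A\subset S_N$ realising the correct asymptotic discrepancy between $\cW_{N,t}^{\lambda_0}$ and $\cM_N$, while the upper bound demands that no other event separates them by more; both directions must match the same Airy$_2$-maximum statistic. This requires a sharp finite-$N$ coupling between $\cM_N$ and the stationary multi-species ASEP on $\Z$ restricted to $[N]$, and is where the argument substantially extends \cite{bufetov2022absorbing}, whose TASEP setting enjoyed an unambiguous absorbing event in place of a subtle TV-distance extremum.
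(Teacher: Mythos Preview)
Your step-three outline for how GOE emerges is essentially the paper's route: shift-invariance collapses the multi-species height data onto a single single-species height function sampled along a line, ASEP-to-KPZ-fixed-point convergence produces an Airy-type limit, and the maximum $\sup_x(\cA(x)-x^2)$ is GOE. (The paper inserts one step you omit: a skew-time-reversibility swap converting the step-initial-data problem into a flat-initial-data one, so the limit is literally $\fh(1,0)$ for the KPZ fixed point started from zero.)

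The genuine gap is in steps one and two. First a telling slip: the worst-case initial state is the \emph{identity}, not the decreasing permutation $(N,N-1,\ldots,1)$; the latter is the ground state of $\cM_N$ and is therefore the best, not the worst, starting point. More importantly, your passage from the card shuffling on $[N]$ to the multi-species ASEP on $\Z$ is only a heuristic (``boundary information propagates at $O(1)$ speed''). This is precisely the step where the $q=0$ argument of \cite{bufetov2022absorbing} used an exact truncation coupling that fails for $q>0$. The paper replaces it with a quantitative comparison (Proposition~\ref{prop:hec}): under the basic coupling, the discrepancy between the finite-interval height function $h\{\xi_t^k\}$ and the $\Z$ height function $h\{\zeta_t^k\}$ is bounded via a Hecke-algebra involution argument that tracks second-class particles and reduces to a Mallows-measure energy estimate. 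You invoke Hecke algebra only for color-position symmetry; the paper's new idea is to use it in the opposite direction, turning a second-class-particle event into a single-species stationary tail bound.

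Your TV upper bound is also incomplete as stated. Saying the TV distance is ``asymptotically dictated'' by the max-over-$k$ height observable does not by itself give an upper bound: you must show that once every projection $\xi_t^k$ is close to its ground state, the basic-coupled chains $\bla$ and $\obxi$ actually \emph{coalesce} within $o(N^{1/3})$ additional time. The paper does this with a separate hitting-time estimate (Proposition~\ref{prop:hitrp}), iterating a bound from \cite{benjamini2005mixing} on segments of length $O(\log(N)^2)$; without such a step, closeness of height functions does not translate into a coupling bound on the TV distance.
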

The Tracy-Widom GOE distribution was introduced in \cite{tracy1996orthogonal}, where it was shown to govern the fluctuation of the largest eigenvalue in the Gaussian Orthogonal Ensemble (GOE) random matrices.
It can be defined as
\begin{equation} \label{eq:goedef}
F_{\GOE}(s)=e^{-2^{-1}\int_s^\infty q(x)+(x-s)^2q(x)^2dx},
\end{equation}
where $p(x)$ is the Hastings-McLeod solution of the second Plainlev\'{e} equation $p''(x)=2p(x)^3+xp(x)$, with boundary condition $p(x)\sim\Ai(x)$ as $x\to\infty$ for $\Ai$ being the Airy function.

Our main result (Theorem \ref{thm:main}) basically says that there is a cutoff phenomenon in the total-variation convergence of the biased card shuffling, with cutoff window in the order of $N^{1/3}$, and the cutoff profile is GOE Tracy-Widom.

\subsection{Background and related works}

The word `cutoff' was first used by Aldous and Diaconis \cite{aldous1986shuffling} to describe the phenomenon, where (usually in terms of the total-variation distance) a Markov chain stays away from its stationary distribution for some time, but then converges abruptly;
see the review articles \cite{diaconis1996cutoff, saloff2004random} and the textbook \cite{wilmer2009markov}.
To rigorously establish cutoff often requires rather careful understandings of the particular Markov chains, as achieved in many instances in the past decades, see e.g. \cite{diaconis1981generating, aldous1983random, bayer1992trailing, lubetzky2010cutoff, ding2010total, lubetzky2013cutoff, lubetzky2016cutoff, labbe2019cutoff, salez2021cutoff} where cutoff was proved for various (families of) Markov chains using a variety of methods, and many of them were on different card shuffling processes.

The `cutoff window' refers to the small period when the total-variation decays from near $1$ to near $0$, and the `cutoff profile' describes the shape of the decay within the window.
These are refined information of Markov chains, and are not available even for most Markov chains where cutoff have been established. 
Among those cases where cutoff profiles are known, many of them are given by the distribution function of Gaussian random variables, see e.g. \cite{lacoin2016cutoff, lubetzky2016cutoff, ben2017cutoff} (and also \cite{chatterjee2022phase} where cutoff is in a slightly different setting). 
Some other types of cutoff profiles have also been discovered; see e.g. \cite{teyssier2020limit,nestoridi2022limit,bufetov2022cutoff}.

The biased card shuffling, the Markov chain we analyze in this paper, was first studied by Diaconis and Ram in \cite{diaconis2000analysis} under the name `random Metropolis scan', as it is an instance of the Metropolis algorithm that samples biased random permutations.
The mixing of this Markov chain was later studied in \cite{benjamini2005mixing}, whose main result states that the mixing time is at most linear in the size $N$. 
A simpler proof of this result later appeared in \cite{greenberg2009sampling}.
Since then, whether cutoff happens for the biased card shuffling remained an open problem.
This was answered affirmatively by Labb{\'e} and Lacoin \cite{labbe2019cutoff}, who proved that the total-variation cutoff happens at time $2(1-q)^{-1}N$. In the same paper the spectral gap was also computed.\\

\noindent\textbf{Simple Exclusion Process.} A closely related Markov chain is the \emph{Asymmetric Simple Exclusion Process} (ASEP), which has been used as a key tool in studying the biased card shuffling.
It is a classical interacting particle system, and has been intensively studied in the literature (back to e.g. \cite{liggett1985interacting}).
It can be formulated as follows.
For a (finite or infinite) sequence of sites (indexed by $\Z$ or a finite discrete interval), each site may be occupied by a particle or be empty (later we shall represent this by a function from $\Z$ or the finite discrete interval to $\{0, 1\}$, with $1$ denoting a particle, and $0$ denoting a hole).
For each particle, if the site next to it in the right is empty, with rate $1$ it jumps to the right; and if the site next to it in the left is empty, with rate $q$ it jumps to the left.
Such jumps happen independently for each particle in each direction.
The ASEP is an important model in the so-called  Kardar-Parisi-Zhang (KPZ) universality class, and has exact-solvable structures.
The hydrodynamics are well understood, and its diffusive scaling limit has been identified with the KPZ equation \cite{bertini1997stochastic}.
However, the scaling invariant and long time limit of the ASEP should be obtained under the KPZ scaling instead. That limit is termed the \emph{KPZ fixed point}, and was first constructed in \cite{matetski2016kpz} as the KPZ scaling limit in the \emph{totally asymmetric} case (i.e. when $q=0$), and later proved for the general $q\in [0,1)$ case in \cite{quastel2022convergence}.

\begin{figure}[hbt!]
    \centering
\begin{tikzpicture}[line cap=round,line join=round,>=triangle 45,x=7cm,y=7cm]
\clip(-3,-0.2) rectangle (-0.5,.63);

\draw [line width=1.2pt, opacity=0.3] (-2.5,0.3) -- (-0.7,0.3);
\draw [fill=white] (-2.4,0.3) circle (6.0pt);
\draw [fill=white] (-2.3,0.3) circle (6.0pt);
\draw [fill=uuuuuu] (-2.2,0.3) circle (6.0pt);
\draw [fill=uuuuuu] (-2.1,0.3) circle (6.0pt);
\draw [fill=white] (-2.,0.3) circle (6.0pt);
\draw [fill=uuuuuu] (-1.9,0.3) circle (6.0pt);
\draw [fill=white] (-1.8,0.3) circle (6.0pt);
\draw [fill=white] (-1.7,0.3) circle (6.0pt);
\draw [fill=white] (-1.6,0.3) circle (6.0pt);
\draw [fill=white] (-1.5,0.3) circle (6.0pt);
\draw [fill=white] (-1.4,0.3) circle (6.0pt);
\draw [fill=uuuuuu] (-1.3,0.3) circle (6.0pt);
\draw [fill=uuuuuu] (-1.2,0.3) circle (6.0pt);
\draw [fill=white] (-1.1,0.3) circle (6.0pt);
\draw [fill=uuuuuu] (-1.,0.3) circle (6.0pt);
\draw [fill=white] (-0.9,0.3) circle (6.0pt);
\draw [fill=uuuuuu] (-0.8,0.3) circle (6.0pt);

\draw [line width=1.2pt, opacity=0.3] (-2.5,0.1) -- (-0.7,0.1);
\draw [fill=uuuuuu] (-2.4,0.1) circle (6.0pt);
\draw [fill=white] (-2.3,0.1) circle (6.0pt);
\draw [fill=uuuuuu] (-2.2,0.1) circle (6.0pt);
\draw [fill=uuuuuu] (-2.1,0.1) circle (6.0pt);
\draw [fill=white] (-2.,0.1) circle (6.0pt);
\draw [fill=uuuuuu] (-1.9,0.1) circle (6.0pt);
\draw [fill=white] (-1.8,0.1) circle (6.0pt);
\draw [fill=white] (-1.7,0.1) circle (6.0pt);
\draw [fill=white] (-1.6,0.1) circle (6.0pt);
\draw [fill=white] (-1.5,0.1) circle (6.0pt);
\draw [fill=white] (-1.4,0.1) circle (6.0pt);
\draw [fill=uuuuuu] (-1.3,0.1) circle (6.0pt);
\draw [fill=uuuuuu] (-1.2,0.1) circle (6.0pt);
\draw [fill=white] (-1.1,0.1) circle (6.0pt);
\draw [fill=uuuuuu] (-1.,0.1) circle (6.0pt);
\draw [fill=white] (-0.9,0.1) circle (6.0pt);
\draw [fill=uuuuuu] (-0.8,0.1) circle (6.0pt);

\draw [line width=1.2pt, opacity=0.3] (-2.5,-0.1) -- (-0.7,-0.1);
\draw [fill=uuuuuu] (-2.4,-0.1) circle (6.0pt);
\draw [fill=white] (-2.3,-0.1) circle (6.0pt);
\draw [fill=uuuuuu] (-2.2,-0.1) circle (6.0pt);
\draw [fill=uuuuuu] (-2.1,-0.1) circle (6.0pt);
\draw [fill=white] (-2.,-0.1) circle (6.0pt);
\draw [fill=uuuuuu] (-1.9,-0.1) circle (6.0pt);
\draw [fill=uuuuuu] (-1.8,-0.1) circle (6.0pt);
\draw [fill=white] (-1.7,-0.1) circle (6.0pt);
\draw [fill=white] (-1.6,-0.1) circle (6.0pt);
\draw [fill=white] (-1.5,-0.1) circle (6.0pt);
\draw [fill=white] (-1.4,-0.1) circle (6.0pt);
\draw [fill=uuuuuu] (-1.3,-0.1) circle (6.0pt);
\draw [fill=uuuuuu] (-1.2,-0.1) circle (6.0pt);
\draw [fill=white] (-1.1,-0.1) circle (6.0pt);
\draw [fill=uuuuuu] (-1.,-0.1) circle (6.0pt);
\draw [fill=white] (-0.9,-0.1) circle (6.0pt);
\draw [fill=uuuuuu] (-0.8,-0.1) circle (6.0pt);

\draw [line width=1.2pt, opacity=0.3] (-2.5,0.5) -- (-0.7,0.5);
\draw [fill=white] (-2.4,0.5) circle (6.0pt);
\draw [fill=white] (-2.3,0.5) circle (6.0pt);
\draw [fill=white] (-2.2,0.5) circle (6.0pt);
\draw [fill=white] (-2.1,0.5) circle (6.0pt);
\draw [fill=white] (-2.,0.5) circle (6.0pt);
\draw [fill=white] (-1.9,0.5) circle (6.0pt);
\draw [fill=white] (-1.8,0.5) circle (6.0pt);
\draw [fill=white] (-1.7,0.5) circle (6.0pt);
\draw [fill=white] (-1.6,0.5) circle (6.0pt);
\draw [fill=white] (-1.5,0.5) circle (6.0pt);
\draw [fill=white] (-1.4,0.5) circle (6.0pt);
\draw [fill=white] (-1.3,0.5) circle (6.0pt);
\draw [fill=white] (-1.2,0.5) circle (6.0pt);
\draw [fill=white] (-1.1,0.5) circle (6.0pt);
\draw [fill=white] (-1.,0.5) circle (6.0pt);
\draw [fill=white] (-0.9,0.5) circle (6.0pt);
\draw [fill=white] (-0.8,0.5) circle (6.0pt);

\draw (-2.5,0.3) node[anchor=east]{particles $\le 7$};
\draw (-2.5,0.1) node[anchor=east]{particles $\le 8$};
\draw (-2.5,-0.1) node[anchor=east]{particles $\le 9$};

\begin{scriptsize}
\draw (-2.4,0.47) node[anchor=south]{$8$};
\draw (-2.3,0.47) node[anchor=south]{$16$};
\draw (-2.2,0.47) node[anchor=south]{$4$};
\draw (-2.1,0.47) node[anchor=south]{$3$};
\draw (-2.,0.47) node[anchor=south]{$17$};
\draw (-1.9,0.47) node[anchor=south]{$5$};
\draw (-1.8,0.47) node[anchor=south]{$9$};
\draw (-1.7,0.47) node[anchor=south]{$10$};
\draw (-1.6,0.47) node[anchor=south]{$11$};
\draw (-1.5,0.47) node[anchor=south]{$14$};
\draw (-1.4,0.47) node[anchor=south]{$12$};
\draw (-1.3,0.47) node[anchor=south]{$2$};
\draw (-1.2,0.47) node[anchor=south]{$1$};
\draw (-1.1,0.47) node[anchor=south]{$15$};
\draw (-1.,0.47) node[anchor=south]{$7$};
\draw (-.9,0.47) node[anchor=south]{$13$};
\draw (-.8,0.47) node[anchor=south]{$6$};
\end{scriptsize}

\end{tikzpicture}
\caption{
An illustration of projecting the biased card shuffling into single-species ASEPs.
}  
\label{fig:biap}
\end{figure}
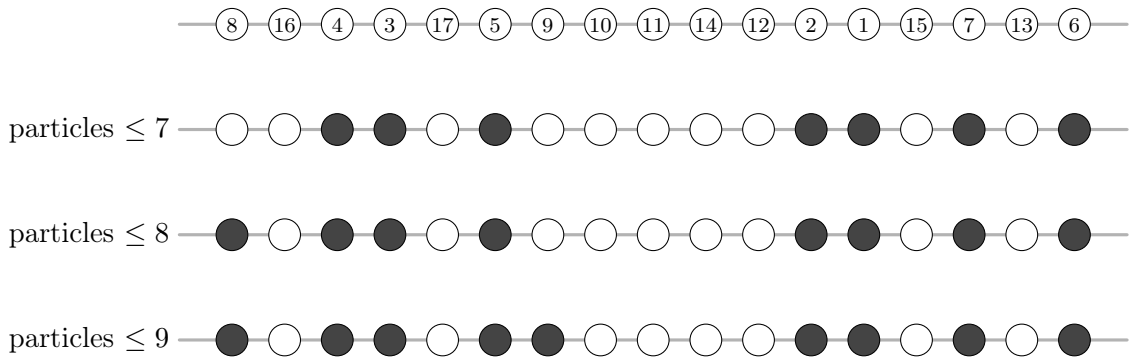

The ASEP on the finite discrete interval $[N]$ can be easily connected to the biased card shuffling of size $N$.
Namely, we can get the ASEP on $[N]$ with $k$ particles, if in the biased card shuffling we replace each number $\le k$ by a particle and each number $>k$ by a hole (see Figure \ref{fig:biap}).
In other words, we can project the biased card shuffling of size $N$ into the ASEP, for each integer $k$ between $0$ and $N$. It is worth mentioning that given all such projections (for all $k$), we can reconstruct the biased card shuffling (see Section \ref{ssec:mset} below for the details).
For this reason we also refer to the biased card shuffling of size $N$ as `the multi-species ASEP' or `the colored ASEP' on $[N]$.
As the ordinary single-species ASEP, the multi-species ASEP can also be defined on $\Z$ or any finite discrete interval (not just $[N]$).
More precisely, we define the multi-species ASEP as an evolving bijection from $\Z$ or a finite discrete interval to itself, such that for any pair of nearest neighbors, if the left number is smaller than the right one, they swap with rate $1$; otherwise they swap with rate $q$; and such swaps happen independently for all pairs of nearest neighbors.

The mixing of the single-species ASEP on a finite interval was also studied in \cite{benjamini2005mixing}, where a `pre-cutoff' was proven.
Namely, it was shown that the total-variation distance decays from $1$ to $0$ in a certain time scale, but not necessarily in a small time window.
The cutoff problem (of the single-species ASEP) was settled in \cite{labbe2019cutoff} (together the biased card shuffling cutoff), using hydrodynamics of the ASEP on $\Z$.
In \cite{labbe2019cutoff} Labb{\'e} and Lacoin expected that the cutoff window should be in the order of $N^{1/3}$ with the cutoff profile related to the so-called Airy$_2$ process.
These were later established by Bufetov and Nejjar in \cite{bufetov2022cutoff}. They verified the $N^{1/3}$ order of cutoff window and showed that the cutoff profile is given by the GUE Tracy-Widom distribution (which is the one point distribution of the Airy$_2$ process, and was introduced by Tracy and Widom in \cite{tracy1994level} as the scaling limit of the fluctuation of the largest eigenvalue of the Gaussian Unitary Ensemble (GUE) matrices).
As noted in \cite[Section 1.2]{bufetov2022cutoff}, while proving cutoff for the biased card shuffling and the single-species ASEP follow similar and largely related arguments (in \cite{labbe2019cutoff}), it is `significantly more delicate' to study the cutoff profile of the biased card shuffling than the single-species ASEP. Due to reasons to be explained shortly, in \cite{bufetov2022cutoff} they also conjectured the $N^{1/3}$ cutoff window and the Tracy-Widom GOE cutoff profile for the biased card shuffling, and this conjecture is verified by our Theorem \ref{thm:main}.

We would like to mention that, these mixing time problems in the slightly different symmetric setting, i.e. where $q=1$ rather than $q\in [0,1)$, have also been studied.
The behavior of these chains (single-species/multi-species simple exclusion process in a finite interval) in this symmetric setting is quite different.
For both chains the mixing time is in the order of $N^2\log(N)$ \cite{wilson2004mixing}, and cutoff has been proven in \cite{lacoin2016mixing}; and for the single-species ASEP on a circle the cutoff window is proven to be in the order of $N^2$, with Gaussian cutoff profile \cite{lacoin2016cutoff}.
See also e.g. \cite{gantert2020mixing, gonccalves2021sharp, salez2022universality}, and the survey \cite{lacoin2021mixing} and the references therein, for some recent developments in this direction.\\

\noindent\textbf{The totally asymmetric case.} 
For the biased card shuffling, the special case where $q=0$ has been studied in the literature, first in \cite{angel2009oriented}, under the name `Oriented Swap Process' (OSP) and as a type of random sorting algorithm.
Note that in this case, there is a unique absorbing state $\lambda \in S_N$ where $\lambda(x)=N+1-x$ for each $x\in [N]$, and mixing of this Markov chain degenerates into the `absorbing time', i.e. the time of reaching this absorbing state.
In \cite{angel2009oriented}, the authors studied various aspects of the OSP, including the trajectories of the numbers, and various aspects of the absorbing time.
In particular, they showed that the absorbing time divided by $N$ converges to $2$ in probability. They also studied the finishing time of each number, i.e. the time a number $k\in[N]$ makes its last move.
This can be viewed as the finishing time (in a segment) of the \emph{totally asymmetric simple exclusion process} (TASEP), which is the degeneration of the (single-species) ASEP to the $q=0$ case.
In \cite{angel2009oriented} the authors showed that the one number finishing time has fluctuation in the order of $N^{1/3}$ with GUE Tracy-Widom scaling limit, using TASEP convergence results from \cite{Jo99}.

A question asked in \cite{angel2009oriented} is to figure out the order and distribution of the absorbing time (which can be viewed as the maximum of the finishing times over all $k\in [N]$).
This was settled in \cite{bufetov2022absorbing}, which proved that the absorbing time fluctuation is in the order of $N^{1/3}$ with GOE Tracy-Widom scaling limit, using recently proved symmetries of the six-vertex model \cite{borodin2019shift}. (This is also the reason why the biased card shuffling cutoff profile is conjectured to be GOE Tracy-Widom in \cite{bufetov2022cutoff}.)
Another direction pointed out in \cite{angel2009oriented} is to generate the results of OSP to the partially asymmetric (i.e. $q>0$) case.
Our Theorem \ref{thm:main} can be viewed as such a generalization of \cite{bufetov2022absorbing}.
We would also like to mention a recent interesting development \cite{aggarwal2022asep}, which likely leads to generating the results on trajectories in \cite{angel2009oriented} to the $q>0$ case.

\begin{center}
\begin{tabularx}{0.8\textwidth} { 
  | >{\centering\arraybackslash}X 
  | >{\centering\arraybackslash}X 
  | >{\centering\arraybackslash}X
  | >{\centering\arraybackslash}X | }
 \hline
 &  Mixing time and cutoff & (Single-species) exclusion process cutoff profile & Multi-species exclusion process cutoff profile \\
 \hline
Totally Asymmetric ($q=0$)  & \cite{angel2009oriented}  & \cite{angel2009oriented}\quad\quad\quad\quad\quad\quad   GUE Tracy-Widom  & \cite{bufetov2022absorbing}\quad\quad\quad\quad\quad\quad   GOE Tracy-Widom  \\
\hline
Partially Asymmetric ($q\in(0,1)$) & \cite{benjamini2005mixing}\cite{labbe2019cutoff} & \cite{bufetov2022cutoff}\quad\quad\quad\quad\quad   GUE Tracy-Widom  & Our Theorem \ref{thm:main}\quad\quad\quad\quad   GOE Tracy-Widom  \\
\hline
Symmetric ($q=1$) & \cite{wilson2004mixing}\cite{lacoin2016mixing}  & \cite{lacoin2016cutoff} (on the cycle) Gaussian & unknown \\
\hline
\end{tabularx}    
\captionof{table}{A summary of some results on the mixing/cutoff of single-species/multi-species exclusion processes.}\label{tab:1}

\end{center}

\subsection{Our contribution and main tools}
We next describe our strategy and the used tools, in comparison with related previous works (in particular, \cite{bufetov2022cutoff} and \cite{bufetov2022absorbing}).
As discussed above, \cite{bufetov2022cutoff} studies the general $q\in [0,1)$ problem, but at the level of a single-species ASEP projection; on the other hand, \cite{bufetov2022absorbing} studies the multi-species model, but in the special $q=0$ case.

Our proofs use ingredients from both papers.
As \cite{bufetov2022absorbing}, we can use a shift-invariance property of the multi-species ASEP (from \cite{borodin2019shift, galashin2021symmetries}), to reduce problems on the multi-species ASEP on $\Z$ to problems on the single-species ASEP on $\Z$ with step initial configuration.
In \cite{bufetov2022absorbing} classical convergence results on the TASEP were then used to get the GOE Tracy-Widom limit, while here we instead use the more recently proved ASEP convergence to the KPZ fixed point \cite{quastel2022convergence}.
Then the main task is to connect the cutoff profile problem in a finite interval, with the distribution of the multi-species ASEP on $\Z$.

In the $q=0$ case, due to that swaps happen only in one direction, there is a natural coupling between the multi-species TASEP on a finite interval and on $\Z$, using some truncation operators, as first described in \cite{angel2009oriented}.
Such relation is not available when $q>0$. 
Besides, the total-variation distance (in the $q>0$ case) is also much less tractable than the absorbing time (in the $q=0$ case).

Our main contribution is to develop new ideas to overcome these difficulties, and build the connection in the above `main task'.
We use the height function representation of the single-species ASEP, and Hecke algebra identities; and the core arguments are given in Section \ref{ssec:pmain} and Section \ref{sec:halg} below.
In more details, we couple the biased card shuffling (with a deterministic initial configuration) with the stationary one, so that the swaps are synchronized. This is the \emph{basic coupling} to be defined shortly in Section \ref{ssec:mset}.
Then the mixing time is reduced to the stopping time when they are equal for the first time.
By considering the single-species ASEP projections and using ordering properties of the ASEP height function, this problem is eventually reduced to comparing two (single-species) ASEPs on $\Z$ with different initial configurations.

For two (single-species) ASEPs under the basic coupling, such that the set of particle locations of one of them contains the set of particle locations of the other one, they are equivalent to the ASEP with \emph{second-class particles}.
A second-class particle is a particle which can swap with a hole next to it in the right, or a particle next to it in the left, with rate $1$ respectively.
It can also swap with a hole next to it in the left, or a particle next to it in the right, with rate $q$ respectively.
It is straightforward to check that, by replacing each second-class particle with a hole, or replacing each second-class particle with a particle, one can get two (single-species) ASEPs which are under the basic coupling.
Thus to study the difference of these two (single-species) ASEPs, one just need to track the locations of the second-class particles.
For this we use Hecke algebra identities, reducing it to events under the Mallows measure, which is well-understood with explicit distribution functions (e.g. \eqref{eq:maldis} and \eqref{eq:Pmnb} below).

Some of these ingredients, such as the basic coupling and Hecke algebra, have also appeared in \cite{bufetov2022cutoff}.
However, there are several key differences between the arguments in this paper and \cite{bufetov2022cutoff}.
The most significant aspect is in our usages of Hecke algebra.
In \cite{bufetov2022cutoff} Hecke algebra was used to translate events on certain single-species ASEPs to events on certain ASEPs with second-class particles, which were analyzed directly.
In this paper, Hecke algebra is used in the opposite direction.
Namely, we introduce second-class particles for we wish to compare two (single-species) ASEPs; and the Hecke algebra is used to reduce events on the ASEP with second-class particles to events on another (single-species) ASEP without second-class particles.
Besides, while ordering of particle configurations was also used in \cite{bufetov2022cutoff}, it is more extensively exploited in this paper, with the help of the more intuitive height function.
Using these we can directly compare ASEPs on a finite interval and $\Z$, and such comparison was not obtained in \cite{bufetov2022cutoff}.
Finally, we mention that our proof can be easily adapted to the single-species ASEP setting, to get an alternative proof of the main result of \cite{bufetov2022cutoff}
(see Remark \ref{rem:rec} below).

\subsection*{Notations and terminologies}
Some basic notations are used throughout this paper (and some of them have appeared already).
For any $x,y \in \R\cup\{-\infty, \infty\}$ we let $x\wedge y=\min\{x,y\}$ and $x\vee y=\max\{x,y\}$, and let $\llbracket x,y\rrbracket$ be the discrete interval, containing all integers $\ge x$ and $\le y$. We also write $[N]=\llbracket 1,N\rrbracket$ for any $N\in\N$.
Below $q$ always denotes the rate of reversed jumps or swaps, and we always assume that $q\in [0,1)$.

\subsection*{Organization of the remaining text}
The remaining text mainly focuses on the proofs.
In Sections \ref{ssec:mallows} and \ref{ssec:mset},  we set up some notations and provide some preliminary lemmas; then in Section \ref{ssec:pmain} we state the main steps of our arguments, as three propositions, and prove Theorem \ref{thm:main} assuming them.
The most important one (of the three steps), where we compare the biased card shuffling and the multi-species ASEP on $\Z$, is implemented in Section \ref{sec:halg}.
In the last two sections we do the remaining steps respectively. Specifically, in Section \ref{sec:hit} we prove an estimate for certain `ground state hitting time' of the single-species ASEP, and in Section \ref{sec:asepk} we deduce convergence to the GOE Tracy-Widom distribution using known ASEP shift-invariance and convergence results.

\section{Preliminaries and main steps}   \label{sec:mains}
\subsection{The stationary Mallows measure}  \label{ssec:mallows}
We start by giving the formal definition of the Mallows measures, which are stationary measures of the biased card shuffling or the multi-species ASEP on a finite interval.
For the convenience of later arguments, we consider the interval $\llbracket m,n\rrbracket$ for integers $m\le n$.
As in \cite{bufetov2022cutoff}, we let $S_{m,n}$ denote the set of all bijections between $\llbracket m,n\rrbracket$ and itself,
and let $\cM_{m,n}$ denote the Mallows measure on $S_{m,n}$, such that for any $w\in S_{m,n}$,
\begin{equation}  \label{eq:maldis}
\cM_{m,n}(w)=q^{\kappa(w)}Z_{m,n},    
\end{equation}
where (and also for the rest of this paper) we denote $\kappa(w)=\sum_{m\le i<j\le n}\don[w(i)<w(j)]$ as the `energy' for any $w\in S_{m,n}$, and
\begin{equation}  \label{eq:defz}
Z_{m,n}=\frac{1}{\sum_{w\in S_{m,n}} q^{\kappa(w)} }=\prod_{i=1}^{n-m+1}\frac{1-q}{1-q^i}.    
\end{equation}
Note that there is a unique $w\in S_{m,n}$ with $\kappa(w)=0$, i.e. $w:i=\mapsto m+n-i$; and that is the `ground state'.
It is straightforward to check that $\cM_{m,n}$ is the stationary measure of the biased card shuffling on $\llbracket m,n\rrbracket$.

For each $k\in \llbracket 0,n-m+1\rrbracket$, we let $\cP_{m,n}^k$ be the measure on $\{0,1\}^{\llbracket m,n\rrbracket}$, obtained from $\cM_{m,n}$ under the projection where each $w\in S_{m,n}$ is mapped to $x\mapsto \don[w(x)\le m+k-1]$.
In words, under $\cP_{m,n}^k$ there are $k$ particles and $n-m+1-k$ holes, if we view each $1$ as a particle and each $0$ as a hole.
One can readily check that, for any $\omega\in\{0,1\}^{\llbracket m,n\rrbracket}$, 
\begin{equation}   \label{eq:Pmnb}
\cP_{m,n}^k(\omega)=\don\big[\sum_{i\in\llbracket m,n\rrbracket} \omega(i)=k\big] q^{\kappa(\omega)} Z_{m,n}^k,    
\end{equation}
where (as a slight misuse of notations, and also for the rest of this paper) $\kappa(\omega)=\sum_{m\le i<j\le n}\don[\omega(i)=1]\don[\omega(j)=0]$ is the `energy' for any $\omega \in\{0,1\}^{\llbracket m,n\rrbracket}$, and
\[
Z_{m,n}^k=\frac{1}{\sum_{w\in \{0,1\}^{\llbracket m,n\rrbracket}, \sum_{i\in\llbracket m,n\rrbracket} \omega(i)=k} q^{\kappa(w)} }=\frac{\prod_{i=1}^{k}(1-q^i)\prod_{i=1}^{n-m+1-k}(1-q^i)}{\prod_{i=1}^{n-m+1}(1-q^i)}.
\]
Note that here the `ground state' is given by $\omega:i\mapsto \don[i\ge n-k+1]$.
From this definition of $\cP_{m,n}^k$ as a projection of $\cM_{m,n}$, it is stationary under the (single-species) ASEP evolution on $\llbracket m,n\rrbracket$.

We will frequently use the following estimate of the `energy' under $\cP_{m,n}^k$.
\begin{lemma}  \label{lem:invbd}
There is a constant $C>0$ depending only on $q$, such that for any integers $m\le n$, $k\in \llbracket 0,n-m+1\rrbracket$, and any $a\in\N$,
\begin{equation}  \label{eq:invbd}
\cP_{m,n}^k\big(\{\omega\in \{0,1\}^{\llbracket m,n\rrbracket}: \kappa(\omega) > a\}\big)<Cq^{a/2}.    
\end{equation}
\end{lemma}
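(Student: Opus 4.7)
The plan is to apply Markov's inequality to the random variable $q^{-\kappa(\omega)/2}$, which is an increasing function of $\kappa(\omega)$ since $q\in[0,1)$ (I will assume $q\in(0,1)$ throughout; the case $q=0$ is trivial because $\cP_{m,n}^k$ is then supported on the unique ground state $\omega:i\mapsto\don[i\ge n-k+1]$, which has $\kappa=0$, so both sides of \eqref{eq:invbd} vanish for $a\ge 1$). Markov's inequality then gives
$$\cP_{m,n}^k\bigl(\kappa(\omega)>a\bigr)\;\le\;q^{a/2}\,\E_{\cP_{m,n}^k}\bigl[q^{-\kappa(\omega)/2}\bigr],$$
so the task reduces to bounding the expectation on the right by a finite constant depending only on $q$, uniformly in $m$, $n$, and $k$.

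Writing $N:=n-m+1$ and $p:=\sqrt{q}\in(0,1)$, the explicit formula \eqref{eq:Pmnb} rewrites this expectation as
$$\E_{\cP_{m,n}^k}\bigl[q^{-\kappa(\omega)/2}\bigr]\;=\;Z_{m,n}^k\sum_\omega p^{\kappa(\omega)}\;=\;\frac{\sum_\omega p^{\kappa(\omega)}}{\sum_\omega q^{\kappa(\omega)}},$$
where both sums range over $\omega\in\{0,1\}^{\llbracket m,n\rrbracket}$ with $\sum_i\omega(i)=k$. Both sums are classical Gaussian binomial coefficients (the statistic $\kappa$ counting inversions of a $0$-$1$ word is the standard one), and so the ratio is exactly $\binom{N}{k}_p/\binom{N}{k}_q$.

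The last step is to bound this ratio uniformly in $N$ and $k$. Using the product formula $\binom{N}{k}_r=\prod_{i=1}^k(1-r^{N-k+i})/(1-r^i)$ together with the factorization $1-q^i=(1-p^i)(1+p^i)$, a direct telescoping shows
$$\frac{\binom{N}{k}_p}{\binom{N}{k}_q}\;=\;\prod_{i=1}^{k}\frac{1+p^i}{1+p^{N-k+i}}\;\le\;\prod_{i=1}^{\infty}(1+p^i),$$
and the infinite product on the right is finite since $p<1$. This yields \eqref{eq:invbd} with $C:=\prod_{i\ge 1}(1+q^{i/2})$. The only mildly delicate point is recognizing that the factorization $1-q^i=(1-p^i)(1+p^i)$ makes the ratio of $q$-binomials telescope into a bound that is independent of both $N$ and $k$; everything else is a routine Markov estimate combined with the generating function identity for inversions.
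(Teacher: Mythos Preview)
Your proof is correct and takes a genuinely different route from the paper's. The paper's argument is combinatorial: it uses only that $Z_{m,n}^k\le 1$ and that the number of $0$--$1$ words (of fixed length and weight) with energy $i$ is bounded by the number $\alpha_i$ of partitions of $i$ (the coefficient of $z^i$ in $\prod_{j\ge 1}(1-z^j)^{-1}$), so that the tail probability is at most $\sum_{i>a}\alpha_i q^i$, which is then bounded by $q^{(a+1)/2}\prod_{j\ge 1}(1-q^{j/2})^{-1}$. Your argument instead applies an exponential Markov bound and evaluates $\E[q^{-\kappa/2}]$ exactly as a ratio of Gaussian binomials, which telescopes via $1-q^i=(1-p^i)(1+p^i)$ to the product $\prod_{i=1}^{k}(1+p^i)/(1+p^{N-k+i})\le\prod_{i\ge 1}(1+q^{i/2})$. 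The paper's route is shorter and requires no algebraic identity beyond the partition generating function, at the cost of throwing away the normalizing constant; your route keeps track of the normalization exactly and produces a cleaner explicit constant. Both yield the same $q^{a/2}$ decay rate, and the lemma is not used elsewhere in a way that would favor one constant over the other.
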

\begin{proof}
Let $\sum_0^\infty \alpha_i z^i$ be the Taylor series for $\prod_{i=1}^\infty(1-z^i)^{-1}$.
By \eqref{eq:Pmnb}, and noting that $Z_{m,n}^k\le 1$, the left-hand side of \eqref{eq:invbd} is at most
\[
\sum_{i=a+1}^\infty \alpha_i q^i<q^{(a+1)/2}\prod_{i=1}^\infty(1-q^{i/2})^{-1}.
\]
By taking $C=q^{1/2}\prod_{i=1}^\infty(1-q^{i/2})^{-1}$ the conclusion follows.
\end{proof}
For $N\in\N$ and $k\in\llbracket 0,N\rrbracket$, below we also write $S_N=S_{1,N}$, $\cM_N=\cM_{1,N}$, and $\cP_N^k=\cP_{1,N}^k$, for simplicity of notations.

\subsection{Setup for several processes}   \label{ssec:mset}

In this subsection we define some processes to be used in the proof of Theorem \ref{thm:main}, and discuss some basic properties of them.
\begin{itemize}
    \item Let $\bzeta=(\zeta_t)_{t\ge 0}$ be the multi-species ASEP, such that the configuration at any time $t\ge 0$ is denoted as a bijection $\zeta_t:\Z\to\Z$.
    Let the initial configuration $\zeta_0$ be the identity map of $\Z$.
\end{itemize}
In defining the following processes and for the rest of this paper, we take $N\in\N$.
\begin{itemize}
    \item We let $\bxi=(\xi_t)_{t\ge 0}$ be the biased card shuffling of size $N$, with $\xi_t:[N]\to [N]$ being the state at time $t$. Let the initial configuration $\xi_0$ be the identity map of $[N]$.
    \item We let $\bla=(\lambda_t)_{t\ge 0}$ be the same as $\bxi=(\xi_t)_{t\ge 0}$, except for that the initial configuration $\lambda_0$ is some general deterministic element of $S_N$.
    \item Let $\obxi=(\oxi_t)_{t\ge 0}$ be the stationary biased card shuffling of size $N$.
    Then for any $t\ge 0$ the law of $\oxi_t$ is $\cM_N$.
\end{itemize}
\noindent\textbf{Basic coupling.}
For the multi-species or single-species ASEP on $\Z$ or a discrete interval, the evolution can be generated by two independent Poisson point processes on $\Z\times \R_{\ge 0}$, with rates $1$ and $q$ respectively.
In the setting of the multi-species ASEP on $\Z$, if there is a point at $(x,t)$ in the rate $1$ Poisson point process, at time $t$ we swap the numbers at $x$ and $x+1$, if before the swap the number at $x$ is smaller than the number at $x+1$; and if there is a point at $(x,t)$ in the rate $q$ Poisson point process, at time $t$ we swap the numbers at $x$ and $x+1$, if before the swap the number at $x$ is larger than the number at $x+1$. 
In the discrete interval setting, one only considers such points $(x,t)$ with both $x$ and $x+1$ in the discrete interval.
One can generate the single-species ASEP (from these Poisson point processes) similarly. 
The \emph{basic coupling} between two or more multi-species or single-species ASEPs is the coupling under which all these processes are generated from the same pair of Poisson point processes.

For the defined processes $\bzeta$, $\bxi$, $\bla$, and $\obxi$, we couple all of them with the basic coupling.\\

\noindent\textbf{Projections.}  For any $k\in \llbracket 0,N\rrbracket$, we define the processes $\bxi^k=(\xi^k_t)_{t\ge 0}$, $\bzeta^k=(\zeta^k_t)_{t\ge 0}$, and $\obxi^k=(\oxi^k_t)_{t\ge 0}$, as follows.
For each $t\ge 0$, we let $\xi_t^k, \lambda_t^k, \oxi_t^k:[N]\to\{0,1\}$ be the projections, where
\[
\xi_t^k(x) = \don[\xi_t(x)\le k],\quad
\lambda_t^k(x) = \don[\lambda_t(x)\le k],\quad
\oxi_t^k(x) = \don[\oxi_t(x)\le k].
\]
And for each $k\in\Z$ and $t\ge 0$, we let $\zeta_t^k:\Z\to\{0,1\}$ be the projection where
\[
\zeta_t^k(x) = \don[\zeta_t(x)\le k].
\]
Then these projections are single-species ASEPs on $\Z$ or $[N]$, and are also coupled together under the basic coupling (i.e. generated with the same Poisson point processes).
We note that, for each $t\ge 0$, given $\zeta_t^k$ for all $k\in\Z$, one can recover $\zeta_t$ by letting
\[
\zeta_t(x)=\min\{k:\zeta_t^k(x)=1\}.
\]
Similar recovery statements hold for $\xi_t$, $\lambda_t$, and $\oxi_t$.
\\

\noindent\textbf{Height function.} For any $\omega:\Z\to \{0,1\}$ such that $\omega(x)$ stabilizes as $x\to-\infty$, we define the height function $h\{\omega\}:\Z\to \Z$, as follows.
\begin{enumerate}
    \item For any $x\in \Z$, let $h\{\omega\}(x)-h\{\omega\}(x-1)=2\omega(x)-1$.
    \item If $\omega(x)=0$ for all small enough, we let $h\{\omega\}(x)=x$ for all small enough $x$.
    \item If $\omega(x)=1$ for all small enough, we let $h\{\omega\}(x)=-x$ for all small enough $x$.    
\end{enumerate}
A quick and useful observation is that $h\{\omega\}$ is $1$-Lipschitz.
For any integers $m<n$ and $\omega:\llbracket m, n\rrbracket\to \{0,1\}$, we also define $h\{\omega\}$ by assuming $\omega(x)=0$ for any $x\in \Z$, $x<m$ and $\omega(x)=1$ for any $x\in\llbracket n,\infty\rrbracket$ (see Figure \ref{fig:height}).

Such height functions are widely used in the study of interacting particle systems. 
For $\omega$ evolving as the single-species ASEP, the height function $h\{\omega\}$ evolves in the following way: for some $x\in\Z$,
if $h\{\omega\}(x-1)=h\{\omega\}(x+1)=h\{\omega\}(x)+1$ for some $x\in\Z$, then $h\{\omega\}(x)$ changes to $h\{\omega\}(x)+2$ with rate $1$; if $h\{\omega\}(x-1)=h\{\omega\}(x+1)=h\{\omega\}(x)-1$, then $h\{\omega\}(x)$ changes to $h\{\omega\}(x)-2$ with rate $q$.
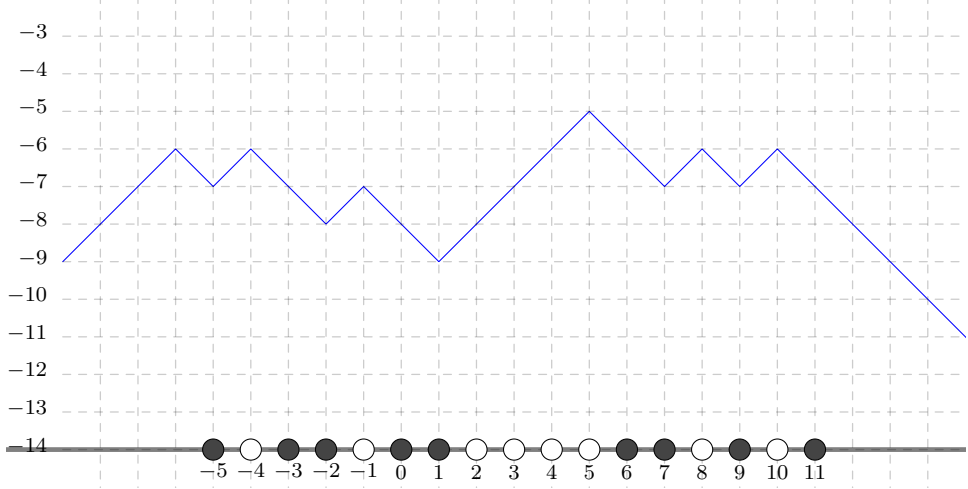
\begin{figure}[hbt!]
    \centering
\begin{tikzpicture}[line cap=round,line join=round,>=triangle 45,x=5cm,y=5cm]
\clip(-2.95,-0.2) rectangle (-0.4,1.1);

\draw [blue] plot coordinates {(-2.8,0.4) (-2.5,0.7) (-2.4,0.6) (-2.3,0.7) (-2.1,0.5) (-2,0.6) (-1.8,0.4) (-1.4,0.8) (-1.2,0.6) (-1.1,0.7) (-1,0.6) (-0.9,0.7) (0,-0.2)};

\foreach \i in {3,...,25}
{
\draw [line width=0.5pt, opacity=0.2] [dashed] (\i/10-3,-0.6) -- (\i/10-3,1.6);
}
\foreach \i in {-14,...,-3}
{
\draw [line width=0.5pt, opacity=0.2] [dashed] (-2.8,\i/10+1.3) -- (1,\i/10+1.3);
\begin{scriptsize}
\draw (-2.82,\i/10+1.31) node[anchor=east]{$\i$};
\end{scriptsize}
}

\draw [line width=1.8pt, opacity=0.5] (-3,-0.1) -- (-0.2,-0.1);
\draw [fill=uuuuuu] (-2.4,-0.1) circle (4.0pt);
\draw [fill=white] (-2.3,-0.1) circle (4.0pt);
\draw [fill=uuuuuu] (-2.2,-0.1) circle (4.0pt);
\draw [fill=uuuuuu] (-2.1,-0.1) circle (4.0pt);
\draw [fill=white] (-2.,-0.1) circle (4.0pt);
\draw [fill=uuuuuu] (-1.9,-0.1) circle (4.0pt);
\draw [fill=uuuuuu] (-1.8,-0.1) circle (4.0pt);
\draw [fill=white] (-1.7,-0.1) circle (4.0pt);
\draw [fill=white] (-1.6,-0.1) circle (4.0pt);
\draw [fill=white] (-1.5,-0.1) circle (4.0pt);
\draw [fill=white] (-1.4,-0.1) circle (4.0pt);
\draw [fill=uuuuuu] (-1.3,-0.1) circle (4.0pt);
\draw [fill=uuuuuu] (-1.2,-0.1) circle (4.0pt);
\draw [fill=white] (-1.1,-0.1) circle (4.0pt);
\draw [fill=uuuuuu] (-1.,-0.1) circle (4.0pt);
\draw [fill=white] (-0.9,-0.1) circle (4.0pt);
\draw [fill=uuuuuu] (-0.8,-0.1) circle (4.0pt);

\begin{scriptsize}
\draw (-2.4,-0.2) node[anchor=south]{$-5$};
\draw (-2.3,-0.2) node[anchor=south]{$-4$};
\draw (-2.2,-0.2) node[anchor=south]{$-3$};
\draw (-2.1,-0.2) node[anchor=south]{$-2$};
\draw (-2.,-0.2) node[anchor=south]{$-1$};
\draw (-1.9,-0.2) node[anchor=south]{$0$};
\draw (-1.8,-0.2) node[anchor=south]{$1$};
\draw (-1.7,-0.2) node[anchor=south]{$2$};
\draw (-1.6,-0.2) node[anchor=south]{$3$};
\draw (-1.5,-0.2) node[anchor=south]{$4$};
\draw (-1.4,-0.2) node[anchor=south]{$5$};
\draw (-1.3,-0.2) node[anchor=south]{$6$};
\draw (-1.2,-0.2) node[anchor=south]{$7$};
\draw (-1.1,-0.2) node[anchor=south]{$8$};
\draw (-1.,-0.2) node[anchor=south]{$9$};
\draw (-.9,-0.2) node[anchor=south]{$10$};
\draw (-.8,-0.2) node[anchor=south]{$11$};
\end{scriptsize}

\end{tikzpicture}
\caption{
An illustration of the height function for an single-species ASEP configuration in a finite interval.
}  
\label{fig:height}
\end{figure}

We can define a partial ordering for these height functions.
For any $f,g:\Z\to\R$, we write $f\le g$ if $f(x)\le g(x)$ for any $x\in \Z$.
One reason of using these height functions is that, as one can check, this ordering is preserved under the ASEP evolution with the basic coupling.
For example, for the processes defined above, the following orderings hold.
\begin{lemma}  \label{lem:hei-ord1}
We have $h\{\xi_t^k\}\le h\{\lambda_t^k\}$, $h\{\xi_t^k\}\le h\{\zeta_t^k\}$, and $h\{\xi_t^k\}\le h\{\oxi_t^k\}$, for any $k\in\llbracket 0,N\rrbracket$ and $t\ge 0$.
\end{lemma}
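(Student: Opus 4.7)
The plan is to check the three inequalities at $t = 0$ and then invoke the monotonicity principle for single-species ASEPs under the basic coupling: if two single-species ASEPs (on $\Z$ or on a common finite interval) are generated by the same pair of rate-$1$ and rate-$q$ Poisson point processes and their height functions satisfy $h \le h'$ at time $0$, then the same inequality persists for all $t \ge 0$. Since $\xi^k$, $\lambda^k$, $\zeta^k$, and $\oxi^k$ are all single-species ASEPs driven by the same Poisson point processes (by definition of the basic coupling applied to $\bzeta$, $\bxi$, $\bla$, $\obxi$), the lemma will then follow from the initial check.

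For the initial orderings, note that $\xi_0^k(x) = \don[x \le k]$ packs all $k$ particles at the leftmost positions $\{1,\ldots,k\}$ of $[N]$. For any $\omega : [N] \to \{0,1\}$ with $k$ particles, $h\{\omega\}(x) = x - 2 \sum_{y=1}^{x} \omega(y)$ on $[0, N]$, and this is minimized at each $x$ precisely when the particles are packed leftmost. Since $\lambda_0^k$ and $\oxi_0^k$ (realization-wise) both have exactly $k$ particles on $[N]$ and share the boundary extension outside $[0, N]$ with $\xi_0^k$, we obtain $h\{\xi_0^k\} \le h\{\lambda_0^k\}$ and $h\{\xi_0^k\} \le h\{\oxi_0^k\}$. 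For the comparison with $\zeta_0^k(x) = \don[x \le k]$ on $\Z$, a direct computation gives $h\{\zeta_0^k\}(x) = -x$ for $x \le k$ and $h\{\zeta_0^k\}(x) = x - 2k$ for $x \ge k$, which coincides with $h\{\xi_0^k\}$ on $[0, N]$; on $x \le 0$ one has $h\{\xi_0^k\}(x) = x \le -x = h\{\zeta_0^k\}(x)$, and on $x \ge N$ one has $h\{\xi_0^k\}(x) = 2N - 2k - x \le x - 2k = h\{\zeta_0^k\}(x)$.

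The main ingredient, and the only nontrivial one, is the monotonicity principle. It suffices to check that a single Poisson firing preserves the ordering. Consider a rate-$1$ mark at $(x, t)$, which triggers an up-move by $2$ at $h$ exactly when $x$ is a local minimum of $h$, and similarly for $h'$. The danger scenario is that $h$ moves up while $h'$ does not. The key observation is that $h' - h$ is always even-valued (since the two boundary extensions agree modulo $2$ and each firing changes the difference by $0$ or $\pm 2$), so combined with the $1$-Lipschitz property of both height functions one obtains the dichotomy: either $h'(x) \ge h(x) + 2$ before the firing, in which case the move is safe, or $h'(x) = h(x)$, in which case the local-minimum condition at $x$ for $h$ together with $h \le h'$ forces $h'(x \pm 1) \ge h(x) + 1 = h'(x) + 1$, so $h'$ also has a local minimum at $x$ and moves up simultaneously. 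A symmetric argument (local maxima and rate-$q$ marks) handles downward moves, and iterating over firings gives the principle.
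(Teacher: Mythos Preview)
Your approach is the same as the paper's---verify the ordering at $t=0$ and then invoke monotonicity under the basic coupling---and your $t=0$ check and parity/Lipschitz argument for the preservation step are correct (in fact your parity dichotomy is a crisper version of what the paper calls ``straightforward to check'').  There is, however, one gap in the comparison $h\{\xi_t^k\}\le h\{\zeta_t^k\}$: here the two processes live on different domains ($[N]$ versus $\Z$), which your monotonicity principle as stated (``on $\Z$ or on a common finite interval'') does not cover.  Concretely, a Poisson mark at $x\notin\llbracket 1,N-1\rrbracket$ affects $h\{\zeta^k\}$ but not $h\{\xi^k\}$.  For a rate-$q$ mark at, say, $x=0$, your dichotomy would conclude ``if $h\{\zeta^k\}(0)=h\{\xi^k\}(0)$ then $h\{\xi^k\}$ also has a local maximum at $0$ and moves down''---but $h\{\xi^k\}(0)$ is frozen, so this step fails.

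The paper handles this by a separate observation: since $h\{\zeta_t^k\}$ is $1$-Lipschitz with $h\{\zeta_t^k\}(x)=-x$ for all sufficiently negative $x$ and $h\{\zeta_t^k\}(x)=x-2k$ for all sufficiently large $x$, one has $h\{\zeta_t^k\}(x)\ge(-x)\vee(x-2k)$ for every $x\in\Z$ and every $t\ge 0$.  This already gives $h\{\zeta_t^k\}\ge h\{\xi_t^k\}$ on $\llbracket-\infty,0\rrbracket\cup\llbracket N,\infty\rrbracket$ for all $t$, so the monotonicity argument need only be run on $\llbracket 1,N-1\rrbracket$, where both processes respond to the same marks and your dichotomy applies verbatim.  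With this one sentence added, your proof is complete.
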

\begin{proof}
We first consider the case where $t=0$.
We have \[h\{\xi_0^k\}(0)=h\{\lambda_0^k\}(0)=h\{\zeta_0^k\}(0)=h\{\oxi_0^k\}(0)=0,\] and \[h\{\xi_0^k\}(N)=h\{\lambda_0^k\}(N)=h\{\zeta_0^k\}(N)=h\{\oxi_0^k\}(N)=N-2k,\]
since
\[\sum_{x\in[N]}\xi_0^k(x)=\sum_{x\in[N]}\lambda_0^k(x)=\sum_{x\in[N]}\zeta_0^k(x)=\sum_{x\in[N]}\oxi_0^k(x)=k.\]
Then since $h\{\xi_0^k\}$ has slope $1$ in $\llbracket-\infty, 0\rrbracket\cup\llbracket k,N\rrbracket$ and slope $-1$ in $\llbracket 0,k\rrbracket\cup\llbracket N,\infty\rrbracket$, and that $h\{\lambda_0^k\}$, $h\{\zeta_0^k\}$, and $h\{\oxi_0^k\}$ are $1$-Lipschitz, we have that these inequalities hold when $t=0$.

For any $t>0$, first note that the inequalities always hold in $\llbracket -\infty, 0\rrbracket$ and $\llbracket N,\infty\rrbracket$.
Indeed, we have $h\{\xi_t^k\}(x)=h\{\lambda_t^k\}(x)=h\{\oxi_0^k\}(x)=x$ for $x\in\llbracket -\infty, 0\rrbracket$, and $h\{\xi_t^k\}(x)=h\{\lambda_t^k\}(x)=h\{\oxi_0^k\}(x)=2N-2k-x$ for $x\in\llbracket N,\infty\rrbracket$; and $h\{\zeta_t^k\}(x)\ge (-x)\vee (x-2k)$ for all $x\in\Z$ (since $h\{\zeta_t^k\}(x)=-x$ for all small enough $x$ and $h\{\zeta_t^k\}(x)=x-2k$ for all large enough $x$, and $h\{\zeta_t^k\}$ is $1$-Lipschitz). 
In the interval $\llbracket 1, N-1\rrbracket$, from the above description on how these height functions evolve in time, it is straightforward to check that, if the inequalities hold at any time, any nearest neighbor swap in the basic coupling would not break them.
Then the inequalities hold for any $t>0$.
\end{proof}

\subsection{Proof of the main result}  \label{ssec:pmain}

With the processes defined in the previous subsection, we need (1) to upper bound the first time $T$ when $\lambda_T=\oxi_T$; (2) to lower bound the first time $T$ when $\oxi_T=\xi_T$, and show that $\oxi_t$ and $\xi_t$ are `quite different' for $t$ between $T-o(N^{1/3})$ and $T$.
We now give the main steps to accomplish these tasks.

We start by explaining how the Tracy-Widom GOE distribution is involved.
This distribution appears in scaling limits of the single-species ASEP, as we will later see using the single-species ASEP convergence results from \cite{quastel2022convergence}.
From these results and using an ASEP shift-invariance properties proved in \cite{borodin2019shift, galashin2021symmetries}, we can deduce convergence results for the multi-species ASEP on $\Z$, as follows.

For $b\in\R$ and $t\ge 0$, let $\cD_{N,t}^{b}$ denote the event where $h\{\zeta_t^k\}(N-k)>N-k+b$ for any $k\in \llbracket 0,N\rrbracket$.
\begin{prop}   \label{prop:asepi}
For any $\tau\in \R$, and any real number sequence $\{b_N\}_{N\in\N}$ such that $\lim_{N\to\infty} N^{-1/3}b_N\to 0$, we have
\[
\lim_{N\to\infty}\PP[\cD_{N,2(1-q)^{-1}(N+\tau N^{1/3})}^{b_N}]=F_{\GOE}(2^{2/3}\tau).
\]
\end{prop}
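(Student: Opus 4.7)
My plan is to combine two inputs: the shift-invariance of the multi-species ASEP from \cite{borodin2019shift,galashin2021symmetries}, and the convergence of the single-species ASEP height function to the KPZ fixed point from \cite{quastel2022convergence}. The Tracy-Widom GOE distribution then enters through Johansson's identity, which identifies $F_{\GOE}$ with the distribution of the supremum of the Airy$_2$ process minus a parabola.

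The first step is to use shift-invariance to reduce the joint event $\cD_{N,t}^{b_N}$, which involves the $N+1$ coupled projections $(\zeta_t^k)_{k=0}^N$, to an event about a single step-initial single-species ASEP. Each projection $\zeta_t^k$ is itself a single-species ASEP starting from a step configuration with the step at position $k$; translating the initial data by $-k$ shows that the marginal law of $h\{\zeta_t^k\}(N-k)-k$ coincides with that of $h\{\tilde\zeta_t\}(N-2k)$, where $\tilde\zeta$ is the step-initial ASEP with step at $0$. Under this translation, the per-$k$ condition $h\{\zeta_t^k\}(N-k)>N-k+b_N$ becomes $h\{\tilde\zeta_t\}(N-2k)>N-2k+b_N$. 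Shift-invariance lifts this marginal identification to a joint-distribution identity, analogous to the role it plays in the TASEP setting of \cite{bufetov2022absorbing}; consequently $\PP[\cD_{N,t}^{b_N}]$ equals the probability that $h\{\tilde\zeta_t\}(N-2k)>N-2k+b_N$ holds simultaneously for all $k\in\llbracket 0,N\rrbracket$ in a single step-initial ASEP.

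The second step is to apply KPZ-scaling convergence at time $t = 2(1-q)^{-1}(N+\tau N^{1/3})$. Under the standard KPZ rescaling, the fluctuations of $h\{\tilde\zeta_t\}$ around its parabolic hydrodynamic profile converge (by \cite{quastel2022convergence}) to the Airy$_2$ process. Parameterizing by $u = (N-2k)/N \in [-1,1]$, the intersection of the $N+1$ height inequalities translates (with the $b_N = o(N^{1/3})$ correction vanishing) to the event $\sup_{u\in\R}\bigl(\mathrm{Airy}_2(u)-u^2\bigr)\le c\tau$, with an explicit normalization constant $c$. Johansson's identity $\PP\bigl[\sup_{u\in\R}(\mathrm{Airy}_2(u)-u^2)\le s\bigr]=F_{\GOE}(2^{2/3}s)$ then yields the claimed limit (with the normalization arranged so $c=1$).

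I expect the main obstacle to be the shift-invariance step: picking the right formulation that produces the precise joint reduction, and tracking the deterministic shifts. A secondary technical issue is to pass from the intersection of $N+1$ discrete conditions to a supremum over all of $\R$ in the limit, which requires tightness and uniform convergence on compact subsets of $u$, together with a tail estimate showing that extreme $|u|$ contribute negligibly (the parabolic decay of the relevant profile should provide this).
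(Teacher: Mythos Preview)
Your overall strategy---shift-invariance to a single step-initial ASEP, then KPZ convergence plus Johansson's identity---is correct and is essentially the paper's approach. But you have a sign error in the shift step that changes the target event. With the paper's height convention one has $h\{\zeta_0^k\}(x)=h\{\zeta_0^0\}(x-k)-k$, so the marginal identity reads $h\{\zeta_t^k\}(N-k)+k\doeq h\{\tilde\zeta_t\}(N-2k)$ (with $+k$, not $-k$). Hence shift-invariance (the paper's Proposition~\ref{prop:shinv}) turns $\cD_{N,t}^{b_N}$ into the event $h\{\tilde\zeta_t\}(N-2k)>N+b_N$ for all $k$: a \emph{constant} threshold, not your sloped threshold $N-2k+b_N$. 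With a sloped threshold the minimum of $h\{\tilde\zeta_t\}(y)-y$ over $y\in\llbracket -N,N\rrbracket$ sits near the edge $y=N$ rather than near $0$, the centering is off by a macroscopic amount, and you will not obtain $\sup_u(\mathrm{Airy}_2(u)-u^2)$ in the limit.

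With the correct constant threshold your second step can be made to work, but note that $y\in\llbracket -N,N\rrbracket$ corresponds in KPZ coordinates to a spatial window of order $N^{1/3}\to\infty$ (the right spatial scale is $N^{2/3}$, not the $N$ you wrote), so the uniform-on-compacts convergence from \cite{quastel2022convergence} must be supplemented by a genuine tail bound. The paper avoids this by first applying skew-time reversibility of ASEP (see \eqref{eq:equivr}) to convert the multi-point step-IC event into a \emph{one-point} event $h\{\eta_t^{N,*}\}(0)>2N+b_N$ for a near-flat initial configuration $\eta_0^{N,*}$; then pointwise KPZ convergence from flat data gives the marginal $F_{\GOE}$ directly. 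The two routes are equivalent in spirit (skew-time reversibility of ASEP is the discrete counterpart of Johansson's identity), but the paper's ordering sidesteps the tightness issue you flagged.
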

The proof of this proposition will be given in Section \ref{sec:asepk}.

In the TASEP setting (i.e. when $q=0$), the multi-species process on a finite interval can be obtained from the multi-species process on $\Z$, using truncation operators (see e.g. \cite{angel2009oriented}). Thus Proposition \ref{prop:asepi} directly implies that the absorbing time has GOE Tracy-Widom distribution asymptotically; and that is the proof in \cite{bufetov2022absorbing}.

For the general $q>0$ setting, we need to study the more delicate mixing time (rather than absorbing time).
A more important difference is that, there is no such truncation operators and no exact coupling structure between the process on $\Z$ and the process on a finite interval.
Thus we instead show that, under the basic coupling, the processes on $\Z$ and a finite interval are `close to each other' in terms of the height functions of their projections, in the following sense.

For $a\in\N$, $b\in\R$, $k\in\llbracket 0,N\rrbracket$, and $t\ge 0$, let $\cA_{N,k,t}^{a,b}$ denote the event where $h\{\zeta_t^k\}(N-k)>N-k+b$, and 
\[\text{either  }\;
h\{\xi_t^k\}(N-k-a)<N-k-a\;\text{  or  }\; h\{\xi_t^k\}(N-k+a)<N-k-a.\]
\begin{prop}  \label{prop:hec}
There is a constant $C>0$ depending only on $q$, such that
$\PP[\cA_{N,k,t}^{a,b}]<Cq^{a/2}$ when $b>4a$.
\end{prop}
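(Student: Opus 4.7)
My strategy is to reduce the proposition to three successive steps: a deterministic reduction via the $1$-Lipschitz property, a second-class-particle coupling interpretation of the height gap, and a Hecke algebra manipulation that maps the event onto a Mallows energy tail handled by Lemma~\ref{lem:invbd}.

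The first step uses that $h\{\zeta^k_t\}$ is $1$-Lipschitz, being a single-species ASEP height function on $\Z$. On $\cA^{a,b}_{N,k,t}$, the condition $h\{\zeta^k_t\}(N-k) > N-k+b$ immediately forces $h\{\zeta^k_t\}(N-k \pm a) > N-k+b-a$ for both signs, while the second clause gives $h\{\xi^k_t\}(N-k \pm a) < N-k-a$ for at least one sign. Combining these,
\[
\cA^{a,b}_{N,k,t} \subset \bigcup_{y \in \{N-k-a,\, N-k+a\}}\bigl\{h\{\zeta^k_t\}(y) - h\{\xi^k_t\}(y) > b\bigr\}.
\]
By Lemma~\ref{lem:hei-ord1}, the gap is always non-negative, so it suffices to bound the probability of each member of the union by $Cq^{a/2}$ (for an appropriate constant $C$).

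Next, I would interpret this height gap using the standard dictionary between a pair of basic-coupled single-species ASEPs $h\le h'$ and a single ASEP with first-class particles and second-class particles, the second-class particles encoding the gap $h\{\zeta^k_t\} - h\{\xi^k_t\}$. Under this encoding, a height gap exceeding $b$ at position $y$ translates to the existence of at least $b/2$ second-class particles in a half-line ending at $y$. The subtle point is that $\xi^k_t$ lives on $[N]$ with the paper's closed-boundary height extension, while $\zeta^k_t$ lives on $\Z$ with step initial data; the coupling has to be set up carefully to reconcile the two domains, and I expect this reconciliation to be one of the more delicate parts of the argument.

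Finally, following the framework described in the introduction, I would apply a Hecke algebra identity to trade the event on the ASEP with second-class particles for an event on an auxiliary single-species ASEP without second-class particles. This reduced event should be of the form $\{\kappa(\omega) > a\}$ for a configuration $\omega$ distributed as a Mallows projection $\cP^k_{m,n}$, at which point Lemma~\ref{lem:invbd} gives the desired bound $Cq^{a/2}$. The hypothesis $b > 4a$ supplies the slack needed to convert the height-gap threshold $b$ into a Mallows energy threshold $a$; the factor of $4$ presumably absorbs a factor of $2$ from the height-gap-to-particle-count conversion together with another factor of $2$ from the Hecke manipulation. The main obstacle I anticipate is precisely this last step: finding and executing the right Hecke identity (in Section~\ref{sec:halg}) to turn the gap event on an $\Z$-valued process coupled to a finite-interval one into a Mallows energy event of exactly the right scale.
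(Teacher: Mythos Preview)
Your high-level plan (Lipschitz reduction, second-class particles, Hecke involution, Mallows tail) matches the paper's architecture, but a structural ingredient is missing that makes your step~2 fail as written. The second-class-particle dictionary requires the two coupled configurations to be \emph{pointwise} ordered, so that every discrepancy has the same sign and the pair can be realized as two thresholds of one multi-species ASEP. But $\zeta^k_0$ and any full-line extension of $\xi^k_0$ are not pointwise comparable: $\zeta^k_0$ has particles on $(-\infty,0]$ where the extension has holes, while the extension has particles on $[N+1,\infty)$ where $\zeta^k_0$ has holes. Hence no single multi-species ASEP has both as projections, and there is no single Hecke element on which your proposed involution identity could act. (The height ordering $h\{\xi^k_t\}\le h\{\zeta^k_t\}$ from Lemma~\ref{lem:hei-ord1} does hold, but that alone does not yield a second-class-particle system.) The boundary mismatch you flag is a second, independent obstruction: $h\{\xi^k_t\}$ is frozen outside $[N]$ while $h\{\zeta^k_t\}$ evolves there, so the basic coupling of the two height functions is not even well-posed off the interval.

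The paper resolves both issues simultaneously by inserting an \emph{intermediate} full-line process between $\zeta^k$ and a full-line lower bound for $\xi^k$. Concretely, it defines two multi-species ASEPs $\btao,\btad$ on $\llbracket -X,X\rrbracket$ with initial laws $M_{-X,k}$ and $M_{-X,k}M_{-X+k,X}$, coupled so that a common projection agrees: $\etao^{-X+k-1}=\etad^{-X+k-1}$. A pair of projections of $\btao$ then encodes $\zeta^k$ versus the intermediate (discrepancies coming only from the left of $[N]$), and a pair of projections of $\btad$ encodes the intermediate versus the $\xi^k$-lower-bound (discrepancies only from the right). Each pair now \emph{is} two thresholds of a single multi-species process, so the involution identities $W(t)M_{-X,k}=\ii(M_{-X,k}W(t))$ and $W(t)M_{-X,k}M_{-X+k,X}=\ii(M_{-X+k,X}M_{-X,k}W(t))$ convert the two events $\cEo,\cEd$ into Mallows-energy tails bounded by $Cq^{b/4-a/2}$ and $Cq^{a/2}$ respectively; the hypothesis $b>4a$ then makes the first at most the second. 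Without this two-sided splitting via an intermediate process, the single Hecke step you propose has no object to apply to, and the argument does not close.
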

We will prove this proposition in Section \ref{sec:halg}, using Hecke algebra techniques.

In analyzing the first time $T$ when $\lambda_T=\oxi_T$, a key ingredient we need is that, if $\lambda_t$ and $\oxi_t$ are `close to each other' at some time $t$, they will be the same in a short amount of time. We state this result as follows.

For $a\in\N$, $r>0$, $k\in\llbracket 0,N\rrbracket$, and $t\ge 0$, let $\cB_{N,k,t}^{a,r}$ denote the event where
\[
h\{\xi_{t+i}^k\}(N-k-a)=h\{\xi_{t+i}^k\}(N-k+a)=N-k-a,\quad \forall i\in \llbracket 0, r \rrbracket,\]
while $h\{\xi_{t+s}^k\}(N-k)<N-k$ for each $s\in [0, r]$ (in other words, $\xi_{t+s}^k$ is not at the `ground state' for each $s\in [0, r]$, since $h\{\xi_{t+s}^k\}(N-k)=N-k$ is equivalent to that $\xi_{t+s}^k(x)=\don[x\ge N-k+1]$).
\begin{prop}    \label{prop:hitrp}
There is a constant $c>0$ depending only on $q$, such that
$\PP[\cB_{N,k,t}^{a,\log(N)^4}]<e^{-c\log(N)^{2}}$ when $2\le a\le\log(N)^2$.
\end{prop}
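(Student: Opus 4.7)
The event $\cB_{N,k,t}^{a,r}$ combines a rigid pinning of the height function at its maximal value $N-k-a$ at positions $N-k\pm a$ at each integer time $i\in\llbracket 0,r\rrbracket$, with the requirement that $h\{\xi^k_{t+s}\}(N-k)<N-k$ at every $s\in[0,r]$. By the $1$-Lipschitz property together with the fixed boundary values $h\{\xi^k_s\}(0)=0$ and $h\{\xi^k_s\}(N)=N-2k$, the pinning at any time $t+i$ forces the configuration outside the window $W:=\llbracket N-k-a+1, N-k+a\rrbracket$ to be all holes on $\llbracket 1,N-k-a\rrbracket$ and all particles on $\llbracket N-k+a+1, N\rrbracket$, with exactly $a$ particles and $a$ holes inside $W$; and $h\{\xi^k_{s}\}(N-k)=N-k$ happens precisely when $\xi^k_{s}$ equals the global ground state configuration on $[N]$. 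Hence, writing $\tau$ for the first $s\ge 0$ at which $\xi^k_{t+s}$ is the global ground state and applying the pinning at $i=0$,
\[
\PP[\cB_{N,k,t}^{a,r}]\;\le\;\max_{\omega_0\text{ pinned}}\PP\bigl[\tau>r\,\big|\,\xi^k_t=\omega_0\bigr].
\]

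The plan is to establish the uniform one-shot hitting estimate: there exist constants $C_0,c_0>0$ depending only on $q$ such that for every pinned $\omega_0$ with $a\ge 2$,
\[
\PP\bigl[\tau>C_0 a\,\big|\,\xi^k_t=\omega_0\bigr]\;\le\;1-c_0.
\]
Granted this, the strong Markov property applied at the times $C_0 a,2C_0 a,\ldots$ yields $\PP[\tau>r\mid\xi^k_t=\omega_0]\le(1-c_0)^{\lfloor r/(C_0 a)\rfloor}$. With $r=\log(N)^4$ and $a\le\log(N)^2$ the exponent is at least $\log(N)^2/C_0$ for $N$ large, which gives the advertised bound $e^{-c\log(N)^2}$.

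The heart of the argument, and the main obstacle, is the one-shot estimate. Since $\omega_0$ agrees with the global ground state outside $W$, only the $a$ particles and $a$ holes inside $W$ need to be sorted, and the drift from $q<1$ should achieve this in linear time. To make this precise I plan to approximate the dynamics on $W$ by the stand-alone single-species ASEP on $\llbracket 1, 2a\rrbracket$ with $a$ particles, which by the Labb\'e--Lacoin cutoff mixes to the Mallows measure $\cP_{1,2a}^a$ in time $O(a)$. Under $\cP_{1,2a}^a$ the local ground state has probability $Z_{1,2a}^a=\prod_{i=1}^a(1-q^i)/(1-q^{a+i})\ge \prod_{i=1}^\infty(1-q^i)>0$ uniformly in $a$, using that $(1-q^i)/(1-q^{a+i})\ge 1-q^i$. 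Combining mixing with this stationary lower bound yields a constant lower bound $c_0$ on the probability of visiting the local (hence global) ground state by time $C_0 a$. The technical difficulty is that $\xi^k$ actually lives on $[N]$ rather than $W$, and particles or holes can leak across the boundary of $W$ via reverse swaps at rate $q$; I intend to absorb these boundary effects via a monotone sandwich argument based on the height-function ordering from Lemma \ref{lem:hei-ord1}, coupling $\xi^k$ through the basic coupling with the stand-alone window process and with a ground-state--initialized ASEP on $[N]$, so that the hitting estimate for the window dynamics transfers back to $\xi^k$.
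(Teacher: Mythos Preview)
Your high-level plan---iterate a one-shot hitting estimate via the Markov property---is exactly the paper's strategy. But your proposed one-shot estimate has a genuine gap in the ``monotone sandwich'' step.

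To deduce that $\xi^k$ reaches the ground state (i.e.\ $h\{\xi^k_{t+s}\}(N-k)=N-k$) from a comparison process $\rho$, you need $h\{\rho_s\}\le h\{\xi^k_{t+s}\}$ for all $s$, so that $h\{\rho_s\}(N-k)=N-k$ forces the same for $\xi^k$ (since $N-k$ is the maximal attainable value there). But the stand-alone window process on $\llbracket N-k-a+1,N-k+a\rrbracket$ satisfies the \emph{opposite} inequality: its height function is frozen at the maximal value $N-k-a$ on the window boundary for all time, while $h\{\xi^k_{t+s}\}$ can dip below there (a rate-$q$ reverse swap can push a particle onto site $N-k-a$, for instance). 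Under the basic coupling one obtains $h\{\xi^k_{t+s}\}\le h\{\rho^W_s\}$, not $\ge$, so the window process hitting its ground state tells you nothing about $\xi^k$. The ground-state--initialized process on $[N]$ has the same problem: it sits above $\xi^k$, not below. Labb\'e--Lacoin mixing for the window dynamics therefore cannot be transferred in the direction you need.

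The paper avoids this by comparing with the process $\bta^{N-k,a}$ on all of $\Z$ (holes on $\llbracket-\infty,N-k-a\rrbracket$, then $a$ particles, then $a$ holes, then particles on $\llbracket N-k+a+1,\infty\rrbracket$). At $s=0$ its height function is the \emph{minimal} $1$-Lipschitz function compatible with the pinning, hence lies below $h\{\xi^k_t\}$; and because it evolves on $\Z$, boundary leakage can only lower it further, so the ordering $h\{\eta^{N-k,a}_s\}\le h\{\xi^k_{t+s}\}$ persists. The hitting time of $\bta^{N-k,a}$ to its ground state is then controlled by the Benjamini--Berger--Hoffman--Mossel bound (Lemma~\ref{lem:mix05}), giving $\PP[H>C_*a]<1/a$ directly---no mixing or stationary-probability detour is needed.

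A second, smaller issue: your iteration writes $\PP[\tau>r\mid\omega_0]\le(1-c_0)^{\lfloor r/(C_0a)\rfloor}$ using only the pinning at $i=0$, but the one-shot estimate requires the configuration at each restart time $iC_0a$ to be pinned. You must invoke the pinning hypothesis of $\cB_{N,k,t}^{a,r}$ at every integer time $iC_0a\in\llbracket 0,r\rrbracket$ (taking $C_0\in\Z$), conditioning on the full history as in the paper's proof rather than only on $\omega_0$.
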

This proposition will be proved in Section \ref{sec:hit}, using an ASEP hitting time bound from \cite{benjamini2005mixing}.

We now give the proof of our main result (Theorem \ref{thm:main}), which mainly consists of assembling these propositions.

\begin{proof}[Proof of Theorem \ref{thm:main}: upper bound]
The key thing is to show that, for whatever initial configuration $\lambda_0$, we have
\begin{equation}  \label{eq:lacov}
\liminf_{N\to\infty}\PP[\lambda_{2(1-q)^{-1}(N+\tau N^{1/3})+\log(N)^4}=\oxi_{2(1-q)^{-1}(N+\tau N^{1/3})+\log(N)^4}] \ge F_{\GOE}(2^{2/3}\tau).   
\end{equation}
Let's first explain the strategy to prove it. In light of Proposition \ref{prop:asepi}, the main idea is to show that the event in the left-hand side of \eqref{eq:lacov} is roughly implied by $\cD_{N,2(1-q)^{-1}(N+\tau N^{1/3})}^{8\log(N)^4}$.
We will show that, under $\cD_{N,2(1-q)^{-1}(N+\tau N^{1/3})}^{8\log(N)^4}$, for each $k\in\llbracket 0,N\rrbracket$, $\xi_{2(1-q)^{-1}(N+\tau N^{1/3})}^k$ is `close' to the ground state, i.e., most locations in $\llbracket N-k+1, N\rrbracket$ have particles, and most locations in $\llbracket 1, N-k\rrbracket$ have holes.
Then there exists some small $s>0$, such that $\xi_{2(1-q)^{-1}(N+\tau N^{1/3})+s}^k$ is precisely at the ground state.
These two steps are given by Propositions \ref{prop:hec} and \ref{prop:hitrp}.
Then $h\{\xi_{2(1-q)^{-1}(N+\tau N^{1/3})+s}^k\}$ reaches the `largest possible state', i.e. it has slope $1$ in $\llbracket -\infty, n-k\rrbracket$ and slope $-1$ in $\llbracket n-k, \infty\rrbracket$.
By ordering of the height functions (Lemma \ref{lem:hei-ord1}), this implies the event in the left-hand side of \eqref{eq:lacov}.

We now give the details of proving \eqref{eq:lacov}. Consider the event
\begin{multline*}
\cE_0=\cD_{N,2(1-q)^{-1}(N+\tau N^{1/3})}^{8\log(N)^4} \setminus \\
\cE_1\cup\Big(\bigcup_{k\in\llbracket 0,N\rrbracket, i\in \llbracket 0, \log(N)^4\rrbracket} \cA_{N,k,2(1-q)^{-1}(N+\tau N^{1/3})+i}^{\lfloor \log(N)^2 \rfloor,4\log(N)^4}\Big) \cup \Big(\bigcup_{k\in\llbracket 0,N\rrbracket} \cB_{N,k,2(1-q)^{-1}(N+\tau N^{1/3})}^{\lfloor \log(N)^2 \rfloor,\log(N)^4}\Big).    
\end{multline*}
Here $\cE_1$ is the following event:
\begin{itemize}
    \item For some $k\in \llbracket 0,N\rrbracket$ and $i\in \llbracket 0, \log(N)^4\rrbracket$, we have 
    \begin{equation}  \label{eq:ce1d1}
    h\{\zeta_{2(1-q)^{-1}(N+\tau N^{1/3})}^k\}(N-k)>N-k+8\log(N)^4,
    \end{equation} and 
    \begin{equation}  \label{eq:ce1d2}
    h\{\zeta_{2(1-q)^{-1}(N+\tau N^{1/3})+i}^k\}(N-k)\le N-k+4\log(N)^4.
    \end{equation}
\end{itemize}

Now let's assume that $\cE_0$ happens.
By $\cD_{N,2(1-q)^{-1}(N+\tau N^{1/3})}^{8\log(N)^4} \setminus \cE_1$, we have that for any $k\in \llbracket 0,N\rrbracket$ and $i\in \llbracket 0, \log(N)^4\rrbracket$, \eqref{eq:ce1d2} does not hold.
Then since $\cA_{N,k,2(1-q)^{-1}(N+\tau N^{1/3})+i}^{\lfloor \log(N)^2 \rfloor,4\log(N)^4}$ does not hold, we must have that
\begin{multline*}
h\{\xi_{2(1-q)^{-1}(N+\tau N^{1/3})+i}^k\}(N-k-\lfloor \log(N)^2 \rfloor)=h\{\xi_{2(1-q)^{-1}(N+\tau N^{1/3})+i}^k\}(N-k+\lfloor \log(N)^2 \rfloor)\\=N-k-\lfloor \log(N)^2 \rfloor.    
\end{multline*}
Using this for all $k\in \llbracket 0,N\rrbracket$ and $i\in \llbracket 0, \log(N)^4\rrbracket$, and that $\cB_{N,k,2(1-q)^{-1}(N+\tau N^{1/3})}^{\lfloor \log(N)^2 \rfloor,\log(N)^4}$ does not hold, we have that $\cE_2$ holds, with $\cE_2$ being the event:
\begin{itemize}
    \item For any $k\in \llbracket 0,N\rrbracket$, we have $h\{\xi_{2(1-q)^{-1}(N+\tau N^{1/3})+s_k}^k\}(N-k)=N-k$ for some $s_k\in [0, \log(N)^4]$.
\end{itemize}
In summary, we have shown that $\cE_0$ implies $\cE_2$.

We next show that under $\cE_2$, we must have $\oxi_{2(1-q)^{-1}(N+\tau N^{1/3})+\log(N)^4}=\lambda_{2(1-q)^{-1}(N+\tau N^{1/3})+\log(N)^4}$.
Indeed, assuming $\cE_2$, for any $k\in \llbracket 0,N\rrbracket$ we have that $h\{\xi_{2(1-q)^{-1}(N+\tau N^{1/3})+s_k}^k\}$ is the function with slope $1$ in $\llbracket -\infty, n-k\rrbracket$, and slope $-1$ in $\llbracket n-k, \infty\rrbracket$.
Thus by Lemma \ref{lem:hei-ord1}, the same is true for $h\{\oxi_{2(1-q)^{-1}(N+\tau N^{1/3})+s_k}^k\}$ and $h\{\lambda_{2(1-q)^{-1}(N+\tau N^{1/3})+s_k}^k\}$.
So we have
\[
\xi_{2(1-q)^{-1}(N+\tau N^{1/3})+s_k}^k=\oxi_{2(1-q)^{-1}(N+\tau N^{1/3})+s_k}^k=\lambda_{2(1-q)^{-1}(N+\tau N^{1/3})+s_k}^k.
\]
Since $\obxi$ and $\bla$ are under the basic coupling and $s_k\le \log(N)^4$, this implies that 
\[\oxi_{2(1-q)^{-1}(N+\tau N^{1/3})+\log(N)^4}^k=\lambda_{2(1-q)^{-1}(N+\tau N^{1/3})+\log(N)^4}^k.\]
As this holds for each $k\in \llbracket 0,N\rrbracket$, we have $\oxi_{2(1-q)^{-1}(N+\tau N^{1/3})+\log(N)^4}=\lambda_{2(1-q)^{-1}(N+\tau N^{1/3})+\log(N)^4}$.

Now we have
\begin{multline}  \label{eq:ineqce}
\PP[\oxi_{2(1-q)^{-1}(N+\tau N^{1/3})+\log(N)^4}=\lambda_{2(1-q)^{-1}(N+\tau N^{1/3})+\log(N)^4}] \ge \PP[\cE_2] \ge \PP[\cD_{N,2(1-q)^{-1}(N+\tau N^{1/3})}^{8\log(N)^4}] \\
-\PP[\cE_1]-\sum_{k\in\llbracket 0,N\rrbracket, i\in \llbracket 0, \log(N)^4\rrbracket} \PP[\cA_{N,k,2(1-q)^{-1}(N+\tau N^{1/3})+i}^{\lfloor \log(N)^2 \rfloor,4\log(N)^4}]
-\sum_{k\in\llbracket 0,N\rrbracket} \PP[\cB_{N,k,2(1-q)^{-1}(N+\tau N^{1/3})}^{\lfloor \log(N)^2 \rfloor,\log(N)^4}].
\end{multline}
To get \eqref{eq:lacov}, we next bound $\PP[\cE_1]$.

Let $T_0=0$. For each $j\in\N$, we let $T_j$ be the smallest positive number with
\[
h\{\zeta_{2(1-q)^{-1}(N+\tau N^{1/3})+T_j}^k\}(N-k)=h\{\zeta_{2(1-q)^{-1}(N+\tau N^{1/3})}^k\}(N-k)-2j;
\]
or $T_j=\infty$ if no such positive number exists.
Then given $T_{j-1}$ and $\zeta_{2(1-q)^{-1}(N+\tau N^{1/3})+T_{j-1}}^k$, $T_j-T_{j-1}$ stochastically dominates an $\Exp(q)$ random variable.
Thus $T_{\lfloor2\log(N)^4\rfloor}$ stochastically dominates the sum of $\lfloor2\log(N)^4\rfloor$ independent $\Exp(q)$ random variables.
On the other hand, if \eqref{eq:ce1d1} and \eqref{eq:ce1d2} happen for some $i\in \llbracket 0, \log(N)^4\rrbracket$, we must have $T_{\lfloor2\log(N)^4\rfloor}<\log(N)^4$.
Using a Chernoff bound, this happens with probability $<Ce^{-c\log(N)^4}$, for $c,C>0$ being universal constants.
By a union bound over $k$, we have that $\PP[\cE_1]<C(N+1)e^{-c\log(N)^4}$.

Using this bound of $\PP[\cE_1]$, and \eqref{eq:ineqce}, and Propositions \ref{prop:asepi}, \ref{prop:hec}, \ref{prop:hitrp}, and sending $N\to\infty$, we have that \eqref{eq:lacov} holds.

Finally, as the distributions of $\oxi_{2(1-q)^{-1}(N+\tau N^{1/3})+\log(N)^4}$ and $\lambda_{2(1-q)^{-1}(N+\tau N^{1/3})+\log(N)^4}$ are given by $\cM_N$ and $\cW_{N,2(1-q)^{-1}(N+\tau N^{1/3})+\log(N)^4}^{\lambda_0}$, respectively, by \eqref{eq:lacov} we have that
\[
\limsup_{N\to\infty}\max_{\lambda\in S_N} \|\cW_{N,2(1-q)^{-1}(N+\tau N^{1/3})+\log(N)^4}^\lambda-\cM_N\|_{\TV} \le 1-F_{\GOE}(2^{2/3}\tau).
\]
Since $t\mapsto \|\cW_{N,t}^\lambda-\cM_N\|_{\TV}$ is non-increasing, and that $F_{\GOE}$ is a continuous function (which is evident from \eqref{eq:goedef}), the upper bound of Theorem \ref{thm:main} follows.
\end{proof}
The lower bound proof is similar and simpler, since for this we just need to show that $\xi_{2(1-q)^{-1}(N+\tau N^{1/3})}^k$ and $\oxi_{2(1-q)^{-1}(N+\tau N^{1/3})}^k$ are different with probability at least (roughly) $1-F_{\GOE}(2^{2/3}\tau)$.
Only the ASEP on $\Z$ convergence (Proposition \ref{prop:asepi}) and ordering of the height functions (Lemma \ref{lem:hei-ord1}) will be used.
\begin{proof}[Proof of Theorem \ref{thm:main}: lower bound]
We consider the events $\cE$:
\[
h\{\xi_{2(1-q)^{-1}(N+\tau N^{1/3})}^k\}(N-k)\le N-k-\log(N)^2,\quad \exists k\in \llbracket 0,N\rrbracket.
\]
and $\ocE$:
\[
h\{\oxi_{2(1-q)^{-1}(N+\tau N^{1/3})}^k\}(N-k)\le N-k-\log(N)^2,\quad \exists k\in \llbracket 0,N\rrbracket.
\]
Since the distribution of $\xi_{2(1-q)^{-1}(N+\tau N^{1/3})}$ is $\cW_{N,2(1-q)^{-1}(N+\tau N^{1/3})}^{\xi_0}$ and the distribution of $\oxi_{2(1-q)^{-1}(N+\tau N^{1/3})}$ is $\cM_N$, we have
\[
\|\cW_{N,2(1-q)^{-1}(N+\tau N^{1/3})}^{\xi_0}-\cM_N\|_{\TV} \ge \PP[\cE]-\PP[\ocE].
\]
By Proposition \ref{prop:asepi} and Lemma \ref{lem:hei-ord1}, we have $\liminf_{N\to\infty}\PP[\cE]\ge 1-F_{\GOE}(2^{2/3}\tau)$.
It now suffices to show that $\PP[\ocE]\to 0$ as $N\to\infty$.

For each $k\in\llbracket 0,N\rrbracket$, the event $h\{\oxi_{2(1-q)^{-1}(N+\tau N^{1/3})}^k\}(N-k)\le N-k-\log(N)^2$ implies that
\[
h\{\oxi_{2(1-q)^{-1}(N+\tau N^{1/3})}^k\}(N-k-\lfloor \log(N)^2/2 \rfloor+1)\le N-k-\log(N)^2+\lfloor \log(N)^2/2 \rfloor-1.
\]
Then there must exist an integer $1\le x\le N-k-\lfloor \log(N)^2/2 \rfloor+1$ with $\oxi_{2(1-q)^{-1}(N+\tau N^{1/3})}^k(x)=0$.
This implies that
\[
\kappa(\oxi_{2(1-q)^{-1}(N+\tau N^{1/3})}^k)\ge \lfloor \log(N)^2/2 \rfloor,
\]
where we recall (from Section \ref{ssec:mallows}) that $\kappa(\oxi_{2(1-q)^{-1}(N+\tau N^{1/3})}^k)$ is the `energy'.
By Lemma \ref{lem:invbd}, the probability of this event is bounded by $Cq^{\log(N)^2/4}$, for some $C>0$ depending only on $q$.
Then by taking a union bound over $k$ we have that $\lim_{N\to\infty}\PP[\ocE]=0$, and we finish this proof.
\end{proof}
\begin{rem}  \label{rem:rec}
Using Propositions \ref{prop:hec} and \ref{prop:hitrp} for a single $k$, and known one-point distribution convergence of ASEP to the GUE Tracy-Widom distribution (e.g. \cite[Theorem 3]{tracy2009asymptotics}) instead of Proposition \ref{prop:asepi}, one can recover the main result of \cite{bufetov2022cutoff}, via the same arguments as the proof of Theorem \ref{thm:main}.
Our Proposition \ref{prop:hitrp} plays a similar role as the arguments in Section 3 of \cite{bufetov2022cutoff}, and both use a hitting time bound from \cite{benjamini2005mixing}.
Our Proposition \ref{prop:hec}, on the other hand, plays a similar role as Section 4 of \cite{bufetov2022cutoff}, but its proof (to be given in the next section) is quite different. As discussed above, while both our Proposition \ref{prop:hec} and \cite{bufetov2022cutoff} use Hecke algebra as the main tool, it is used in quite different ways with different points of view, enabling us to analyze this more delicate biased care shuffling.
\end{rem}
The remaining three sections are devoted to proving the three propositions.

\section{ASEP on the line and an interval: Hecke algebra} \label{sec:halg}
In this section we prove Proposition \ref{prop:hec}.
We can write $\cA_{N,k,t}^{a,b}=\cA_{N,k,t}^{a,b,+}\cup\cA_{N,k,t}^{a,b,-}$, where
\begin{itemize}
    \item[$\cA_{N,k,t}^{a,b,-}$:] $h\{\zeta_t^k\}(N-k)>N-k+b$ and
$h\{\xi_t^k\}(N-k-a)<N-k-a$,
    \item[$\cA_{N,k,t}^{a,b,+}$:] $h\{\zeta_t^k\}(N-k)>N-k+b$ and
$h\{\xi_t^k\}(N-k+a)<N-k-a$.
\end{itemize}
By symmetry, below we just bound the probability $\PP[\cA_{N,k,t}^{a,b,+}]$.

\subsection{Hecke algebra and basic properties}

We start by formally introducing Hecke algebra, the main tool of this section. Several notations below are from \cite{bufetov2022cutoff}.

For any integers $m< n$, we can think of $S_{m,n}$ as the permutation group of $\llbracket m,n\rrbracket$, such that for any $w, v\in S_{m,n}$ we let $wv=v\circ w$.
The Hecke algebra $\cH_{m,n}$ is the algebra with basis $\{T_w\}_{w\in S_{m,n}}$, and the rules
\begin{equation}  \label{eq:heckedef}
T_sT_w=
\begin{cases}
T_{sw}, & \text{if } \kappa(sw)=\kappa(w)-1, \\
(1-q)T_w+qT_{sw}, & \text{if } \kappa(sw)=\kappa(w)+1,
\end{cases}
\end{equation}
for any $w\in S_{m,n}$ and $s$ being any nearest neighbor transposition in $S_{m,n}$ (i.e. there is some $i\in\llbracket m,n-1\rrbracket$ such that $s(i)=i+1$, $s(i+1)=i$, and $s(j)=j$ for any $j\in\llbracket m,n\rrbracket\setminus\{i,i+1\}$).
Here we recall (from Section \ref{ssec:mallows}) that $\kappa(w)=\sum_{m\le i<j\le n}\don[w(i)<w(j)]$ is the `energy' for any $w\in S_{m,n}$.

We let $\cH_{m,n}^{\pr}\subset\cH_{m,n}$ be the \emph{probability sub-algebra}, containing all $\sum_{w\in S_{m,n}} p_wT_w$ with $\sum_{w\in S_{m,n}} p_w=1$ and each $p_w\ge 0$.
Then any element in $\cH_{m,n}^{\pr}$ corresponds to a probability measure of $S_{m,n}$.

Note that for any $m'\le m\le n \le n'$, $S_{m,n}$ is naturally embedded into $S_{m',n'}$.
This gives a natural embedding of $\cH_{m,n}$ (resp. $\cH_{m,n}^{\pr}$) into $\cH_{m',n'}$ (resp. $\cH_{m',n'}^{\pr}$).\\

\noindent\textbf{Biased card shuffling.} From \eqref{eq:heckedef}, it is not difficult to see that the evolution of the biased card shuffling can be written as multiplying a uniformly chosen random transposition, at rate $n-m$.
Namely, we denote $\hT_{m,n}=(n-m)^{-1}\sum_{i=m}^{n-1}T_{s_i}$, with $s_i$ being the transposition between $i$ and $i+1$. Let
\[
W_{m,n}(t)=e^{-(n-m)t}\sum_{i=0}^\infty \frac{((n-m)t)^i}{i!} \hT_{m,n}^i,
\]
where $\hT_{m,n}^i$ is the product of $i$ copies of $\hT_{m,n}$ when $i\ge 1$, and $\hT_{m,n}^0=T_{id}$ for $id\in S_{m,n}$ being the identity element (i.e. the identity map of $\llbracket m,n\rrbracket$).
Then $W_{m,n}(t)$ is in $\cH_{m,n}^{\pr}$, and its corresponding probability measure is just the time $t$ distribution of the ASEP on $\llbracket m,n\rrbracket$, starting from the identity element.\\

\noindent\textbf{Mallows element.} A particularly useful object is the \emph{Mallows element} $M_{m,n}$ of $\cH_{m,n}^{\pr}$, which is defined as
\[
M_{m,n}=\sum_{w\in S_{m,n}} \cM_{m,n}(w) T_w= \sum_{w\in S_{m,n}} q^{\kappa(w)}Z_{m,n} T_w,
\]
where $Z_{m,n}$ is defined in \ref{eq:defz}.
Then the probability measure given by $M_{m,n}$ is just the Mallows measure $\cM_{m,n}$.

Note that since $\cM_{m,n}$ is the stationary measure of the biased card shuffling, we must have that $W_{m,n}(t)\to M_{m,n}$ as $t\to\infty$.\\

\noindent\textbf{Involution.}
Let $\ii:\cH_{m,n}\to\cH_{m,n}$ be the linear map where $\ii(T_w)=T_{w^{-1}}$ for any $w\in S_{m,n}$.
It is straightforward to check that $\ii$ is an involutive anti-homomorphism; namely, for any $T_1, T_2 \in \cH_{m,n}$ we have $\ii(T_1T_2)=\ii(T_2)\ii(T_1)$, and $\ii(\ii(T_1))=T_1$ (see e.g. \cite[Proposition 5.1]{bufetov2022cutoff}).

We also have that $\ii(W_{m,n}(t))=W_{m,n}(t)$, since $s=s^{-1}$ for any nearest neighbor transposition $s$ in $S_{m,n}$.
We note that this equality can be interpreted as the ASEP `color-to-position symmetry', as proved in \cite{amir2011tasep, angel2009oriented, borodin2021color}.
By sending $t\to\infty$ we further have $\ii(M_{m,n})=M_{m,n}$.

\subsection{Two additional processes}
We now explain our strategy of bounding $\PP[\cA_{N,k,t}^{a,b,+}]$ (thus proving Proposition \ref{prop:hec}).

In $\cA_{N,k,t}^{a,b,+}$, we consider both $\bzeta^k$, a process on $\Z$, and $\bxi^k$, a process on $[N]$.
To relate them, we shall extend $\bxi^k$, by considering a process on $\Z$, which, roughly speaking, is the same as $\bxi^k$ is $[N]$, while equals $0$ is $\llbracket-\infty,0\rrbracket$ and equals $1$ on $\llbracket N+1,\infty\rrbracket$.
This process is also coupled with $\bzeta^k$ under the basic coupling, and we shall compare it with $\bzeta^k$.

For the comparison, note that the initial configuration of this `extension' of $\bxi^k$ is also (roughly) the same as $\zeta^k_0$ in $[N]$, while different in $\llbracket-\infty,0\rrbracket$ and $\llbracket N+1,\infty\rrbracket$.
We then consider an intermediate process, which is also coupled with $\bxi^k$ and $\bzeta^k$ under the basic coupling, and its initial configuration (roughly) is the same as $\zeta^k_0$ in $\llbracket 1,\infty\rrbracket$, and equals $0$ in $\llbracket-\infty,0\rrbracket$.
This intermediate process together with $\bzeta^k$ can be encoded as one ASEP, by placing a second-class particle at each location where they differ.
Then comparing these two processes is reduced to tracking the locations of the second-class particles, and that can be achieved using the above involution of Hecke algebra.
The comparison between the intermediate process and the `extension' of $\bxi^k$ is analyzed similarly. Putting these together we get estimates on comparing $\bzeta^k$ and $\bxi^k$. See Figure \ref{fig:cp} for an illustration of these processes, whose formal definitions are given below.
\begin{figure}[hbt!]
    \centering
\begin{tikzpicture}[line cap=round,line join=round,>=triangle 45,x=3.7cm,y=3.7cm]
\clip(-3.86,-0.7) rectangle (0.75,2.45);

\draw [line width=3.4pt, color=blue, opacity=0.3] plot coordinates {(-2.95,2.35) (-1.,0.4) (8,9.4)};
\draw [line width=2.2pt, color=yellow] plot coordinates {(-2.95,0.05) (-2,1) (-1.9,0.9) (-1.8,1) (-1.6,0.8) (-1.5,0.9) (-1.,0.4) (-0.4,1.) (-0.3,0.9) (-0.2,1) (1,-0.2)};
\draw [line width=1.1pt, color=brown] plot coordinates {(-2.95,0.05) (-2,1) (-1.9,0.9) (-1.8,1) (-1.6,0.8) (-1.5,0.9) (-1.,0.4) (8,9.4) };
\draw [line width=0.3pt, color=uuuuuu] plot coordinates {(-2.95,0.05) (-1.8,1.2) (-1.,0.4) (-0.3,1.1) (1,-0.2)};

\foreach \i in {1,...,39}
{
\draw [line width=0.5pt, opacity=0.2] [dashed] (\i/10-3,-0.6) -- (\i/10-3,5.6);
}
\foreach \i in {-11,...,14}
{
\draw [line width=0.5pt, opacity=0.2] [dashed] (-2.95,\i/10+1.2) -- (1,\i/10+1.2);
\begin{tiny}
\draw (-2.95,\i/10+1.2) node[anchor=east]{$\i$};
\end{tiny}
}

\draw [line width=1.8pt, opacity=0.5] (-2.95,-0.1) -- (0.95,-0.1);
\draw [fill=uuuuuu] (-2.9,-0.1) circle (3.0pt);
\draw [fill=uuuuuu] (-2.8,-0.1) circle (3.0pt);
\draw [fill=uuuuuu] (-2.7,-0.1) circle (3.0pt);
\draw [fill=uuuuuu] (-2.6,-0.1) circle (3.0pt);
\draw [fill=uuuuuu] (-2.5,-0.1) circle (3.0pt);
\draw [fill=uuuuuu] (-2.4,-0.1) circle (3.0pt);
\draw [fill=uuuuuu] (-2.3,-0.1) circle (3.0pt);
\draw [fill=uuuuuu] (-2.2,-0.1) circle (3.0pt);
\draw [fill=uuuuuu] (-2.1,-0.1) circle (3.0pt);
\draw [fill=uuuuuu] (-2.,-0.1) circle (3.0pt);
\draw [fill=uuuuuu] (-1.9,-0.1) circle (3.0pt);
\draw [fill=uuuuuu] (-1.8,-0.1) circle (3.0pt);
\draw [fill=uuuuuu] (-1.7,-0.1) circle (3.0pt);
\draw [fill=uuuuuu] (-1.6,-0.1) circle (3.0pt);
\draw [fill=uuuuuu] (-1.5,-0.1) circle (3.0pt);
\draw [fill=uuuuuu] (-1.4,-0.1) circle (3.0pt);
\draw [fill=uuuuuu] (-1.3,-0.1) circle (3.0pt);
\draw [fill=uuuuuu] (-1.2,-0.1) circle (3.0pt);
\draw [fill=uuuuuu] (-1.1,-0.1) circle (3.0pt);
\draw [fill=uuuuuu] (-1.,-0.1) circle (3.0pt);
\draw [fill=white] (-0.9,-0.1) circle (3.0pt);
\draw [fill=white] (-0.8,-0.1) circle (3.0pt);
\draw [fill=white] (-0.7,-0.1) circle (3.0pt);
\draw [fill=white] (-0.6,-0.1) circle (3.0pt);
\draw [fill=white] (-0.5,-0.1) circle (3.0pt);
\draw [fill=white] (-0.4,-0.1) circle (3.0pt);
\draw [fill=white] (-0.3,-0.1) circle (3.0pt);
\draw [fill=white] (-0.2,-0.1) circle (3.0pt);
\draw [fill=white] (-0.1,-0.1) circle (3.0pt);
\draw [fill=white] (0.,-0.1) circle (3.0pt);
\draw [fill=white] (0.1,-0.1) circle (3.0pt);
\draw [fill=white] (0.2,-0.1) circle (3.0pt);
\draw [fill=white] (0.3,-0.1) circle (3.0pt);
\draw [fill=white] (0.4,-0.1) circle (3.0pt);
\draw [fill=white] (0.5,-0.1) circle (3.0pt);
\draw [fill=white] (0.6,-0.1) circle (3.0pt);
\draw [fill=white] (0.7,-0.1) circle (3.0pt);
\draw [fill=white] (0.8,-0.1) circle (3.0pt);
\draw [fill=white] (0.9,-0.1) circle (3.0pt);

\draw [line width=1.8pt, opacity=0.5] (-2.95,-0.25) -- (0.95,-0.25);
\draw [fill=white] (-2.9,-0.25) circle (3.0pt);
\draw [fill=white] (-2.8,-0.25) circle (3.0pt);
\draw [fill=white] (-2.7,-0.25) circle (3.0pt);
\draw [fill=white] (-2.6,-0.25) circle (3.0pt);
\draw [fill=white] (-2.5,-0.25) circle (3.0pt);
\draw [fill=white] (-2.4,-0.25) circle (3.0pt);
\draw [fill=white] (-2.3,-0.25) circle (3.0pt);
\draw [fill=white] (-2.2,-0.25) circle (3.0pt);
\draw [fill=white] (-2.1,-0.25) circle (3.0pt);
\draw [fill=white] (-2.,-0.25) circle (3.0pt);
\draw [fill=uuuuuu] (-1.9,-0.25) circle (3.0pt);
\draw [fill=white] (-1.8,-0.25) circle (3.0pt);
\draw [fill=uuuuuu] (-1.7,-0.25) circle (3.0pt);
\draw [fill=uuuuuu] (-1.6,-0.25) circle (3.0pt);
\draw [fill=white] (-1.5,-0.25) circle (3.0pt);
\draw [fill=uuuuuu] (-1.4,-0.25) circle (3.0pt);
\draw [fill=uuuuuu] (-1.3,-0.25) circle (3.0pt);
\draw [fill=uuuuuu] (-1.2,-0.25) circle (3.0pt);
\draw [fill=uuuuuu] (-1.1,-0.25) circle (3.0pt);
\draw [fill=uuuuuu] (-1.,-0.25) circle (3.0pt);
\draw [fill=white] (-0.9,-0.25) circle (3.0pt);
\draw [fill=white] (-0.8,-0.25) circle (3.0pt);
\draw [fill=white] (-0.7,-0.25) circle (3.0pt);
\draw [fill=white] (-0.6,-0.25) circle (3.0pt);
\draw [fill=white] (-0.5,-0.25) circle (3.0pt);
\draw [fill=white] (-0.4,-0.25) circle (3.0pt);
\draw [fill=white] (-0.3,-0.25) circle (3.0pt);
\draw [fill=white] (-0.2,-0.25) circle (3.0pt);
\draw [fill=white] (-0.1,-0.25) circle (3.0pt);
\draw [fill=white] (0.,-0.25) circle (3.0pt);
\draw [fill=white] (0.1,-0.25) circle (3.0pt);
\draw [fill=white] (0.2,-0.25) circle (3.0pt);
\draw [fill=white] (0.3,-0.25) circle (3.0pt);
\draw [fill=white] (0.4,-0.25) circle (3.0pt);
\draw [fill=white] (0.5,-0.25) circle (3.0pt);
\draw [fill=white] (0.6,-0.25) circle (3.0pt);
\draw [fill=white] (0.7,-0.25) circle (3.0pt);
\draw [fill=white] (0.8,-0.25) circle (3.0pt);
\draw [fill=white] (0.9,-0.25) circle (3.0pt);

\draw [line width=1.8pt, opacity=0.5] (-2.95,-0.4) -- (0.95,-0.4);
\draw [fill=white] (-2.9,-0.4) circle (3.0pt);
\draw [fill=white] (-2.8,-0.4) circle (3.0pt);
\draw [fill=white] (-2.7,-0.4) circle (3.0pt);
\draw [fill=white] (-2.6,-0.4) circle (3.0pt);
\draw [fill=white] (-2.5,-0.4) circle (3.0pt);
\draw [fill=white] (-2.4,-0.4) circle (3.0pt);
\draw [fill=white] (-2.3,-0.4) circle (3.0pt);
\draw [fill=white] (-2.2,-0.4) circle (3.0pt);
\draw [fill=white] (-2.1,-0.4) circle (3.0pt);
\draw [fill=white] (-2.,-0.4) circle (3.0pt);
\draw [fill=uuuuuu] (-1.9,-0.4) circle (3.0pt);
\draw [fill=white] (-1.8,-0.4) circle (3.0pt);
\draw [fill=uuuuuu] (-1.7,-0.4) circle (3.0pt);
\draw [fill=uuuuuu] (-1.6,-0.4) circle (3.0pt);
\draw [fill=white] (-1.5,-0.4) circle (3.0pt);
\draw [fill=uuuuuu] (-1.4,-0.4) circle (3.0pt);
\draw [fill=uuuuuu] (-1.3,-0.4) circle (3.0pt);
\draw [fill=uuuuuu] (-1.2,-0.4) circle (3.0pt);
\draw [fill=uuuuuu] (-1.1,-0.4) circle (3.0pt);
\draw [fill=uuuuuu] (-1.,-0.4) circle (3.0pt);
\draw [fill=white] (-0.9,-0.4) circle (3.0pt);
\draw [fill=white] (-0.8,-0.4) circle (3.0pt);
\draw [fill=white] (-0.7,-0.4) circle (3.0pt);
\draw [fill=white] (-0.6,-0.4) circle (3.0pt);
\draw [fill=white] (-0.5,-0.4) circle (3.0pt);
\draw [fill=white] (-0.4,-0.4) circle (3.0pt);
\draw [fill=uuuuuu] (-0.3,-0.4) circle (3.0pt);
\draw [fill=white] (-0.2,-0.4) circle (3.0pt);
\draw [fill=uuuuuu] (-0.1,-0.4) circle (3.0pt);
\draw [fill=uuuuuu] (0.,-0.4) circle (3.0pt);
\draw [fill=uuuuuu] (0.1,-0.4) circle (3.0pt);
\draw [fill=uuuuuu] (0.2,-0.4) circle (3.0pt);
\draw [fill=uuuuuu] (0.3,-0.4) circle (3.0pt);
\draw [fill=uuuuuu] (0.4,-0.4) circle (3.0pt);
\draw [fill=uuuuuu] (0.5,-0.4) circle (3.0pt);
\draw [fill=uuuuuu] (0.6,-0.4) circle (3.0pt);
\draw [fill=uuuuuu] (0.7,-0.4) circle (3.0pt);
\draw [fill=uuuuuu] (0.8,-0.4) circle (3.0pt);
\draw [fill=uuuuuu] (0.9,-0.4) circle (3.0pt);

\draw [line width=1.8pt, opacity=0.5] (-2.95,-0.55) -- (0.95,-0.55);
\draw [fill=uuuuuu] (-1.7,-0.55) circle (3.0pt);
\draw [fill=uuuuuu] (-1.6,-0.55) circle (3.0pt);
\draw [fill=uuuuuu] (-1.5,-0.55) circle (3.0pt);
\draw [fill=uuuuuu] (-1.4,-0.55) circle (3.0pt);
\draw [fill=uuuuuu] (-1.3,-0.55) circle (3.0pt);
\draw [fill=uuuuuu] (-1.2,-0.55) circle (3.0pt);
\draw [fill=uuuuuu] (-1.1,-0.55) circle (3.0pt);
\draw [fill=uuuuuu] (-1.,-0.55) circle (3.0pt);
\draw [fill=white] (-0.9,-0.55) circle (3.0pt);
\draw [fill=white] (-0.8,-0.55) circle (3.0pt);
\draw [fill=white] (-0.7,-0.55) circle (3.0pt);
\draw [fill=white] (-0.6,-0.55) circle (3.0pt);
\draw [fill=white] (-0.5,-0.55) circle (3.0pt);
\draw [fill=white] (-0.4,-0.55) circle (3.0pt);
\draw [fill=white] (-0.3,-0.55) circle (3.0pt);

\begin{scriptsize}
\draw (-3,-0.1) node[anchor=east]{$\etao_0^k$};
\draw (-3,-0.25) node[anchor=east]{$\etao_0^{-X+k-1}=\etad_0^{-X+k-1}$};
\draw (-3,-0.4) node[anchor=east]{$\etad^{-1+k-N}_0$};
\draw (-3,-0.55) node[anchor=east]{$\xi_0^k$};
\end{scriptsize}

\foreach \i in {-11,...,35}
{
\begin{tiny}
\draw (-1.8+\i/10,-0.7) node[anchor=south]{$\i$};
\end{tiny}
}

\end{tikzpicture}
\caption{
An illustration of various processes and their height functions: the black, yellow, brown, blue functions are $h\{\xi_0^k\}$, $h\{\etad^{-1+k-N}_0\}$, $h\{\etao_0^{-X+k-1}\}=h\{\etad_0^{-X+k-1}\}$, and $h'\{\etao_0^k\}$, respectively.
The process $\btad^{-1+k-N}$ is the `extension' of $\bxi^k$, and $\btad^{-X+k-1}=\btao^{-X+k-1}$ is the `intermediate process'.
}  
\label{fig:cp}
\end{figure}
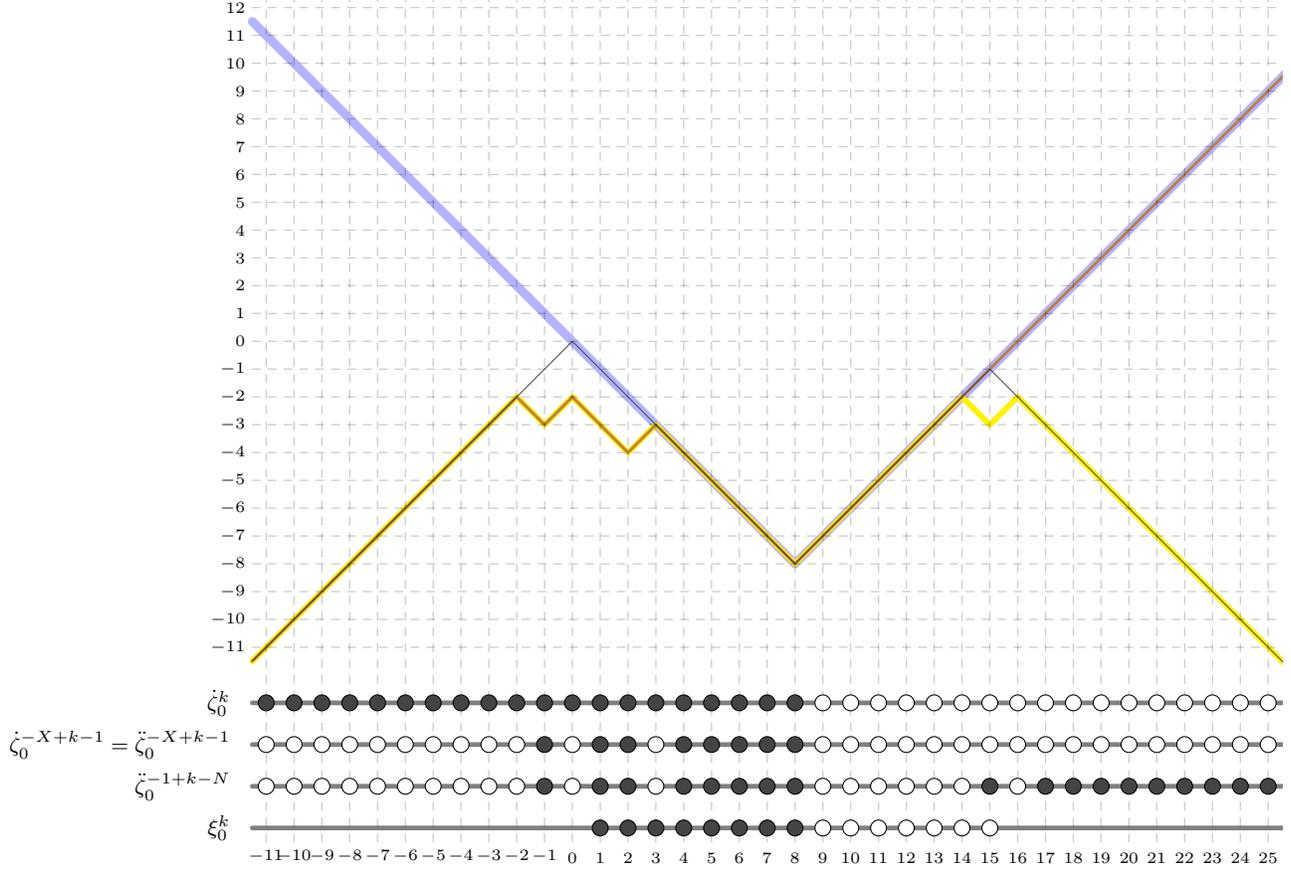

We start by defining the `extension' of $\bxi^k$ and the intermediate process, using the Hecke algebra Mallows elements.
There are two key aspects to note. First, to save notations, we do not formally use the notion of second-class particles, and just write our proof using the multi-species ASEP setup.
Second, to work with the Hecke algebra, we shall consider a large finite interval instead of $\Z$.

Take an integer $X$ that is large enough depending on $N, k, a, b$. 
We let $\btao=(\etao_t)_{t\ge 0}$ and $\btad=(\etad_t)_{t\ge 0}$ be multi-species TASEPs on $\llbracket -X, X\rrbracket$, such that the distribution of $\etao_0$ is given by $M_{-X,k}$, and the distribution of $\etad_0$ is given by $M_{-X,k}M_{-X+k,X}$ (both as elements of $\cH_{-X,X}^{\pr}$).
Thus $\etao_0$ and $\etad_0$ can be described as follows:
\begin{itemize}
    \item[$\etao_0$:] In $\llbracket -X,k\rrbracket$, roughly it is decreasing from $k$ to $-X$ (and the distribution is just $\cM_{-X,k}$). In $\llbracket k+1, X\rrbracket$ it is the identity map.
    \item[$\etad_0$:] It can be equivalently defined as follows: first take the random configuration which is the identity map in $\llbracket -X,-X+k-1\rrbracket$ and has distribution $\cM_{-X+k,X}$ in $\llbracket -X+k,X\rrbracket$; then run the ASEP on $\llbracket -X,k\rrbracket$ to time infinity (this procedure is called `bringing into $Q$-equilibrium' in \cite[Section 5.3]{bufetov2022cutoff}).
    Thus, in $\llbracket -X,0\rrbracket$, roughly $\etad_0$ is decreasing from $X$ to $1$; in $\llbracket 1,k\rrbracket$, roughly $\etad_0$ is decreasing from $-X+k-1$ to $-X$; in $\llbracket k+1,X\rrbracket$, roughly $\etad_0$ is decreasing from $0$ to $-X+k$.
\end{itemize}
The process $\btao$ encodes the coupling between $\bzeta^k$ and the intermediate process, and $\btad$ encodes the coupling between the intermediate process and the `extension' of $\bxi^k$.
Namely, if we denote
\[
\etao_t^i(x)=\don[\etao_t(x)\le i],\quad
\etad_t^i(x)=\don[\etad_t(x)\le i],
\]
for each $i\in\llbracket -X, X\rrbracket$, we make the following quick observations.
\begin{enumerate}
    \item $\etao_0^k(x)=\don[x\le k]$ for $x\in\llbracket-X, X\rrbracket$. In particular, the limit of $\etao_0^k$ as $X\to\infty$ is $\zeta_0^k$.
    \item $\etao_0^{-X+k-1}$ and $\etad_0^{-X+k-1}$ are equal in distribution: it is straightforward to check that either of them in $\llbracket -X,k\rrbracket$ has distribution $\cP^k_{-X,k}$, and either of them is $0$ at every location in $\llbracket k+1,X\rrbracket$.
    We couple $\etao_0^{-X+k-1}$ and $\etad_0^{-X+k-1}$ so that they equal almost surely.
    This is the `intermediate process'.
    \item We also consider $\etad^{-1+k-N}_0$. Roughly it equals $0$ in $\llbracket -X,0\rrbracket\cup \llbracket k+1,N\rrbracket$, and equals $1$ in  $\llbracket 1,k\rrbracket\cup \llbracket N+1,\infty\rrbracket$.
    It is the `extension of $\xi^k_0$'.
\end{enumerate}
See Figure \ref{fig:cp} for an illustration of these processes. Given the initial configurations, we couple the evolution of $\btao$, $\btad$ with $\bzeta$, $\bxi$, so that they are all under the same basic coupling (i.e. generated by the same Poisson point processes on $\Z\times\R_{\ge 0}$).

For technical reasons we need to slightly extend the definition of the height function.
For any integers $m<n$ and $\omega:\llbracket m, n\rrbracket\to \{0,1\}$, we define $h'\{\omega\}=h\{\omega'\}$, where $\omega':\Z\to\{0,1\}$ is the function satisfying $\omega'(x)=\omega'(x)$ for $x\in\llbracket m, n\rrbracket$, and $\omega'(x)=1$ for $x\in\Z\setminus\llbracket m, n\rrbracket$. 
Note that compared with $h\{\omega\}$, the only difference is that we let the slope in $\llbracket -\infty, m\rrbracket$ be $-1$ rather than $1$.

We then have the following ordering property (see Figure \ref{fig:cp}).
\begin{lemma}  \label{lem:hei-ord2}
We have $h\{\etad_t^{-1+k-N}\}\le h\{\xi_t^k\}$ and $h\{\etad_t^{-1+k-N}\}\le h\{\etad_t^{-X+k-1}\}=h\{\etao_t^{-X+k-1}\}\le h'\{\etao_t^k\}$ for any $t\ge 0$.
\end{lemma}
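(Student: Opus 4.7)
The plan is to verify each height-function relation at $t=0$ and then propagate to all $t\ge 0$ via the preservation of pointwise height-function ordering under the basic coupling of single-species ASEPs, exactly as in Lemma \ref{lem:hei-ord1}. Two of the three relations are in fact automatic at every time $t$, not just at $t=0$. Since $-X+k-1<-1+k-N$, the two projections of the single multi-species configuration $\etad_t$ satisfy $\etad_t^{-X+k-1}\le\etad_t^{-1+k-N}$ componentwise; with the common extension convention the explicit formula $h\{\omega\}(x)=x-2|\{y\in\llbracket-X,x\rrbracket:\omega(y)=1\}|$ (for $x\in\llbracket-X,X\rrbracket$) shows that this componentwise ordering flips to give $h\{\etad_t^{-1+k-N}\}\le h\{\etad_t^{-X+k-1}\}$. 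The inequality $h\{\etad_t^{-X+k-1}\}\le h'\{\etao_t^k\}$ is analogous: componentwise $\etao_t^{-X+k-1}\le\etao_t^k$, and comparing the two extension conventions gives $h'\{\etao_t^k\}(x)-h\{\etao_t^{-X+k-1}\}(x)=2X+2-2D(x)$ on $\llbracket-X,X\rrbracket$, where $D(x)$ is the number of $\etao_t$-values in $(-X+k-1,k]$ at positions $\le x$; since there are only $X+1$ such values in total one has $D(x)\le X+1$, giving the inequality. The middle equality $h\{\etad_t^{-X+k-1}\}=h\{\etao_t^{-X+k-1}\}$ is by construction: the two projections agree at $t=0$ almost surely, and as single-species ASEPs on $\llbracket-X,X\rrbracket$ driven by the same Poisson processes under the basic coupling they remain identical for all $t$.

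The substantive step, which I expect to be the main obstacle, is the leftmost inequality $h\{\etad_0^{-1+k-N}\}\le h\{\xi_0^k\}$ at $t=0$. Here $h\{\xi_0^k\}$ is the deterministic V-shape---$x$, $-x$, $x-2k$, $2N-2k-x$ on the four pieces $(-\infty,0]$, $[0,k]$, $[k,N]$, $[N,\infty)$---while $h\{\etad_0^{-1+k-N}\}$ is random because the Mallows sampling in the equivalent construction of $\etad_0$ scrambles the positions of the values $\le k-N-1$. Writing $P(x)$ for the number of $\etad_0^{-1+k-N}$-particles in $\llbracket-X,x\rrbracket$, so that $h\{\etad_0^{-1+k-N}\}(x)=x-2P(x)$ on the support, the desired inequality reduces on the four linear pieces to the bounds $P(x)\ge 0$, $P(x)\ge x$, $P(x)\ge k$, $P(x)\ge k+(x-N)$ respectively. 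All four follow by elementary pigeonholing from two non-random features of the construction: (i) the $k$ initial values $\{-X,\ldots,-X+k-1\}$ cannot leave $\llbracket-X,k\rrbracket$ under the infinite-time ASEP step on that interval, so at least $k$ of the $X+k-N$ total particles lie in $\llbracket-X,k\rrbracket$; (ii) the only other candidates for a particle are the $X-N$ values in $\{-X+k,\ldots,k-N-1\}$, so at most $X-N$ particles can lie in $\llbracket k+1,X\rrbracket$. Combined with the trivial bound ``particles in an interval $\le$ number of positions in that interval'', these give all four estimates.

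The propagation from $t=0$ to arbitrary $t\ge 0$ is routine and mirrors the proof of Lemma \ref{lem:hei-ord1}: a Poisson ring at a bond changes a single height-function value by $\pm 2$, and at a touching point $h^1(x)=h^2(x)$ the local structure forces both processes to swap jointly or neither to swap, so the pointwise ordering is maintained. Once the $t=0$ verification of the leftmost inequality is in hand, everything else is formal.
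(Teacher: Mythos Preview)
Your proof is correct and follows essentially the same architecture as the paper: verify the height-function orderings at $t=0$ and then invoke preservation under the basic coupling as in Lemma \ref{lem:hei-ord1}. Your verification of the leftmost inequality $h\{\etad_0^{-1+k-N}\}\le h\{\xi_0^k\}$ via the particle-count bounds $P(x)\ge 0,\,x,\,k,\,x-N+k$ on the four linear pieces is exactly the content of the paper's argument (which phrases the same counts through the anchor $h\{\etad_0^{-1+k-N}\}(k)\le -k$ and Lipschitz propagation).

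There is one genuine, if minor, simplification in your write-up relative to the paper. For the two middle relations $h\{\etad_t^{-1+k-N}\}\le h\{\etad_t^{-X+k-1}\}$ and $h\{\etao_t^{-X+k-1}\}\le h'\{\etao_t^k\}$, you observe that they hold \emph{at every time} $t$, directly from the monotonicity of multi-species projections (fewer particles at a lower threshold) combined with the explicit formulas $h\{\omega\}(x)=x-2P(x)$ and $h'\{\omega\}(x)=2X+2+x-2P(x)$ on $\llbracket -X,X\rrbracket$; your count $D(x)\le X+1$ for the number of $\etao_t$-values in $(-X+k-1,k]$ landing in $\llbracket -X,x\rrbracket$ is clean and dispenses with any $t=0$ check. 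The paper instead verifies these inequalities only at $t=0$ (via Lipschitz bounds from the anchors $h\{\etad_0^{-1+k-N}\}(k)\le -k$ and $h\{\etao_0^{-X+k-1}\}(k)=-k$) and then propagates. Your route is shorter for those two, though both approaches reduce the leftmost inequality to the same particle-count facts (i) and (ii) about the construction of $\etad_0$.
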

\begin{proof}
We first check that 
\begin{equation} \label{eq:hbd01}
h\{\etad_0^{-1+k-N}\}\le h\{\xi_0^k\}.    
\end{equation}
Note that for any $x\in\Z$, we always have that $h\{\etad_0^{-1+k-N}\}(x)\le x\wedge (2N-2k-x)$, since (by definition) we have $h\{\etad_0^{-1+k-N}\}(-X)=-X$, and $\sum_{x\in\llbracket -X,X\rrbracket}h\{\etad_0^{-1+k-N}\}(x) = X-N+k$ thus $h\{\etad_0^{-1+k-N}\}(X)= 2N-2k-X$, and that $h\{\etad_0^{-1+k-N}\}$ is $1$-Lipschitz.
From the definition of $\etad_0$, for each $x\in\llbracket -X,-X+k-1\rrbracket$, we can find $y\in\llbracket -X,k\rrbracket$ with $\etad_0(y)=x$.
This implies that 
\[\sum_{x\in\llbracket -X,k\rrbracket}h\{\etad_0^{-1+k-N}\}(x)\ge k,
\]
so we have $h\{\etad_0^{-1+k-N}\}(k)\le -k$.
Then for each $x\in\llbracket 1,k\rrbracket$ there is $h\{\etad_0^{-1+k-N}\}(x)\le -x$, and for each $x\in \llbracket k+1,N\rrbracket$ there is $h\{\etad_0^{-1+k-N}\}(x)\le x-2k$ (since $h\{\etad_0^{-1+k-N}\}$ is $1$-Lipschitz).
Putting all the bounds together, we get \eqref{eq:hbd01}, 
since $h\{\xi_0^k\}(x)=x\wedge (2N-2k-x)$ for $x\in\llbracket -\infty,0\rrbracket\cup\llbracket N,\infty\rrbracket$, $h\{\xi_0^k\}(x)=-x$ for $x\in\llbracket 0,k\rrbracket$, and $h\{\xi_0^k\}(x)=x-2k$ for $x\in\llbracket k,N\rrbracket$.

We next check that
\begin{equation} \label{eq:hbd02}
h\{\etad_0^{-1+k-N}\}\le h\{\etad_0^{-X+k-1}\}=h\{\etao_0^{-X+k-1}\}\le h'\{\etao_0^k\}.
\end{equation}
The first inequality holds since $\sum_{y\in\llbracket -X,x\rrbracket}\etad_0^{-1+k-N}(y)\ge \sum_{y\in\llbracket -X,x\rrbracket}\etad_0^{-X+k-1}(y)$, for any $x\in\llbracket -X,X\rrbracket$.
The equality is by the second point above. 

For the second inequality, by the second point above we have that 
\[\sum_{x\in\llbracket -X,k\rrbracket}h\{\etao_0^{-X+k-1}\}(x)=k,
\]
so we have $h\{\etao_0^{-X+k-1}\}(k)=-k$.
Then for each $x\in\llbracket 1,k\rrbracket$ there is $h\{\etao_0^{-X+k-1}\}(x)\le -x$, and for each $x\in \llbracket k+1,X\rrbracket$ there is $h\{\etao_0^{-X+k-1}\}(x)\le x-2k$ (since $h\{\etao_0^{-X+k-1}\}$ is $1$-Lipschitz).
We also have that $h\{\etao_0^{-X+k-1}\}(x)\le x\wedge (2X-2k-x)$ for any $x\in\Z$, since $\sum_{x\in\llbracket -X,X\rrbracket}h\{\etao_0^{-X+k-1}\}(x) = k$ (thus $h\{\etao_0^{-X+k-1}\}(X)=X-2k$), and that $h\{\etao_0^{-X+k-1}\}$ is $1$-Lipschitz.
Putting all the bounds together, we get the second inequality in \eqref{eq:hbd01}, 
since $h'\{\etao_0^k\}(x)=-x$ for $x\in\llbracket -\infty,k\rrbracket$, $h'\{\etao_0^k\}(x)=x-2k$ for $x\in\llbracket k+1,X\rrbracket$, and $h'\{\etao_0^k\}(x)=2X-2k-x$ for $x\in\llbracket X,\infty\rrbracket$.

Finally, using arguments similar to those in the proof of Lemma \ref{lem:hei-ord1}, ordering of these height functions is preserved under the basic coupling, so the conclusion follows.
\end{proof}

We now define the events
\begin{itemize}
    \item[$\cEo$:] $h'\{\etao_t^k\}(N-k)>N-k+b$ and $h\{\etao_t^{-X+k-1}\}(N-k+a)<N-k+a$.
    \item[$\cEd$:] $h\{\etad_t^{-X+k-1}\}(N-k+a)= N-k+a$ and $h\{\etad_t^{-1+k-N}\}(N-k+a)<N-k-a$.
\end{itemize}
By Lemma \ref{lem:hei-ord2}, and the fact that the $X\to\infty$ limit of $\etao_t^k$ is $\zeta_t^k$, we have that the $X\to\infty$ limit of $\cEo\cup\cEd$ contains $\cA_{N,k,t}^{a,b,+}$.

Thus it suffices to bound the probabilities of the events $\cEo$ and $\cEd$.
For this, note that using above stated properties of the involution $\ii$, we have
\begin{equation}  \label{eq:oseq}
W_{-X,X}(t)M_{-X,k}=\ii(W_{-X,X}(t))\ii(M_{-X,k})=\ii(M_{-X,k}W_{-X,X}(t)),    
\end{equation}
and
\begin{equation}  \label{eq:dseq}
W_{-X,X}(t)M_{-X,k}M_{-X+k,X}=\ii(W_{-X,X}(t))\ii(M_{-X,k})\ii(M_{-X+k,X})=\ii(M_{-X+k,X}M_{-X,k}W_{-X,X}(t)).
\end{equation}
These two identities can be interpreted as distributional identities, and are key inputs in bounding $\PP[\cEo]$ and $\PP[\cEd]$.
\begin{lemma}  \label{lem:odot}
We have $\PP[\cEo]\le C q^{b/4-a/2}$ where $C$ is a constant depending only on $q$.
\end{lemma}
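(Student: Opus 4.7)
The plan is to apply the Hecke-algebra identity \eqref{eq:oseq}, which gives that $\etao_t^{-1}$ is distributed as $M_{-X,k} W_{-X,X}(t) = \lim_{s\to\infty} W_{-X,k}(s) W_{-X,X}(t)$. This limit admits the following concrete coupled realization: first sample $\sigma_t$, the time-$t$ multi-species ASEP on $\llbracket -X, X\rrbracket$ started from the identity; set $S = \sigma_t(\llbracket -X, k\rrbracket)$; and then draw, conditionally on $\sigma_t$, a Mallows-distributed bijection $\pi \colon \llbracket -X, k\rrbracket \to S$. The bijection $\phi$ defined by $\phi(y) = \pi(y)$ for $y \le k$ and $\phi(y) = \sigma_t(y)$ for $y > k$ has the distribution of $\etao_t^{-1}$.

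I will then translate both conditions defining $\cEo$ via this realization. A height-function computation rewrites $h'\{\etao_t^k\}(N-k) > N-k+b$ as $|\phi(\llbracket -X, k\rrbracket) \cap \llbracket -X, N-k\rrbracket| < X+1-b/2$; since the Mallows shuffle preserves the set of values $S$ at positions $\llbracket -X, k\rrbracket$, this reduces to $|S \cap \llbracket -X, N-k\rrbracket| < X+1-b/2$, an event depending only on $\sigma_t$. Similarly, $h\{\etao_t^{-X+k-1}\}(N-k+a) < N-k+a$ becomes the Mallows-shuffle event $\pi(\llbracket -X, -X+k-1\rrbracket) \cap (S \cap \llbracket -X, N-k+a\rrbracket) \ne \emptyset$.

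Conditioning on $\sigma_t$, set $p = |S \cap \llbracket -X, N-k+a\rrbracket|$. The single-species projection $\omega(y) = \don[\pi(y) \in S \cap \llbracket -X, N-k+a\rrbracket]$ has law $\cP^p_{-X,k}$, and the shuffle-event becomes ``$\omega$ has a particle in $\llbracket -X, -X+k-1\rrbracket$''. A direct minimisation of $\kappa(\omega)$ over such configurations---the extremiser placing $p-1$ particles at the rightmost positions and the remaining one at $-X+k-1$---gives $\kappa(\omega) \ge X - p + 2$, so by Lemma~\ref{lem:invbd} the conditional probability of the shuffle-event is at most $C q^{(X-p+1)/2}$. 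Finally, the first condition forces $p \le |S \cap \llbracket -X, N-k\rrbracket| + a < X+1-b/2+a$, so $X - p + 1 > b/2 - a$ and thus $C q^{(X-p+1)/2} < C q^{(b/2-a)/2} = C q^{b/4-a/2}$ uniformly under event A; integrating over $\sigma_t$ yields the claim. I expect the most delicate step to be the height-function bookkeeping in the second paragraph---specifically, getting the direction of the first inequality right so that event A bounds $p$ from \emph{above}, which is exactly what makes the Mallows energy estimate from Lemma~\ref{lem:invbd} produce the desired exponent $b/4 - a/2$.
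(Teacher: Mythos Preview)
Your proposal is correct and follows essentially the same route as the paper's proof. Both arguments invoke the Hecke identity \eqref{eq:oseq} to realize $\etao_t^{-1}$ as ``sample $w$ from $W_{-X,X}(t)$, then bring $\llbracket -X,k\rrbracket$ to Mallows equilibrium,'' write the restriction to $\llbracket -X,k\rrbracket$ as $\phi_0\circ v$ with $v\sim\cM_{-X,k}$, translate the two height conditions into a statement about the single-species projection of $v$, and conclude via Lemma~\ref{lem:invbd}. The only cosmetic difference is that the paper first uses the Lipschitz property of $h'\{\etao_t^k\}$ to move both height evaluations to the common site $N-k+a$ before passing to counts, whereas you keep the two sites $N-k$ and $N-k+a$ separate and reconcile them at the end via $p\le |S\cap\llbracket -X,N-k\rrbracket|+a$; this yields the identical bound $p<X+1+a-b/2$ and hence the same exponent $b/4-a/2$.
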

\begin{proof}
Assuming $\cEo$, we must have that $h'\{\etao_t^k\}(N-k+a)>N-k+b-a$ (since $h'\{\etao_t^k\}$ is $1$-Lipschitz) and $h\{\etao_t^{-X+k-1}\}(N-k+a)<N-k+a$. In other words, we have
\[
|\{x\in\Z: N-k+a+1\le x \le X, \etao_t(x)\le k\}|>k-a+b/2,
\]
and
\[
|\{x\in\Z: N-k+a+1\le x \le X, \etao_t(x)\le -X+k-1\}|<k.
\]
Now we use the distributional identity implied by \eqref{eq:oseq}. Let $\uetao_t$ be the inverse of $\etao_t$ (as a group element in $S_{-X,X}$).
Then the above two inequalities are equivalent to
\begin{equation}  \label{eq:uetao1}
|\{x\in\Z: \uetao_t(x)\ge N-k+a+1, -X\le x\le k\}|>k-a+b/2,    
\end{equation}
and
\begin{equation}  \label{eq:uetao2}
|\{x\in\Z: \uetao_t(x)\ge N-k+a+1, -X\le x\le -X+k-1\}|<k,
\end{equation}
respectively. Note that these events only concern $x\mapsto \don[\uetao_t(x)\le N-k+a]$, which is a random configuration in $\{0,1\}^{\llbracket -X,X\rrbracket}$. We next analyze its distribution.

By \eqref{eq:oseq}, the law of $\uetao_t$ is given by $M_{-X,k}W_{-X,X}(t)$, meaning that we can first sample $w$ from the law given by $W_{-X,X}(t)$, then sample $\uetao_t$ from the law given by $M_{-X,k}T_w$.
Equivalently, $\uetao_t$ is the same as $w$ on $\llbracket k+1, X\rrbracket$; and on $\llbracket -X, k\rrbracket$, the law of $\uetao_t$ (conditioned on $w$) is the weak limit of running the (multi-species) ASEP from $w$ to time $\infty$.

Let $\phi:\llbracket -X,k\rrbracket\to\{w(x):x\in\llbracket -X,k\rrbracket\}$ be the unique increasing bijection determined by $w$ in $\llbracket -X,k\rrbracket$.
Then $\uetao_t$ in $\llbracket -X, k\rrbracket$ can be written as $\phi\circ v$, for $v$ sampled from $\cM_{-X,k}$.
Now the event in \eqref{eq:uetao1} is equivalent to that
\[
|\{x\in\Z: \phi(x)\ge N-k+a+1, -X\le x\le k\}|>k-a+b/2.
\]
Since $\phi$ is increasing, this is further equivalent to that
\[
\phi(-\lfloor -a+b/2\rfloor) \ge N-k+a+1.
\]
The event in \eqref{eq:uetao2} is equivalent to that
\[
\exists x\in \llbracket -X, -X+k-1\rrbracket,\quad \uetao_t(x)=\phi(v(x))\le N-k+a.
\]
So (since $\phi$ is increasing) these two events together imply that $v(x)<a-b/2$ for some $x\in \llbracket -X, -X+k-1\rrbracket$, which further implies that $\sum_{-X\le i<j\le k} \don[v(i)<a-b/2, v(j)\ge a-b/2]>b/2-a+1$.
Note that $x\mapsto \don[v(x)<a-b/2]$ has distribution $\cP_{-X,k}^{\lceil a-b/2\rceil+X}$, so by Lemma \ref{lem:invbd}, the probability of this event is at most $Cq^{b/4-a/2}$, for some $C>0$ depending only on $q$.
Thus the conclusion follows.
\end{proof}
The bound for $\PP[\cEd]$ is proved using a similar strategy.
\begin{lemma}  \label{lem:ddot}
We have $\PP[\cEd]\le Cq^{a/2}$ where $C$ is a constant depending only on $q$.
\end{lemma}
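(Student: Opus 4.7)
The plan is to mirror the Hecke-algebra strategy used for Lemma \ref{lem:odot}, now leveraging \eqref{eq:dseq} to describe the law of $\uetad_t=\etad_t^{-1}$ as $M_{-X+k,X}M_{-X,k}W_{-X,X}(t)$. I will unfold this as a two-stage sampling: draw $w\sim W_{-X,X}(t)$; run the multi-species ASEP on $\llbracket -X,k\rrbracket$ from $w$ to stationarity to obtain $w_1$, so that $w_1=\phi\circ v$ on $\llbracket -X,k\rrbracket$ with $v\sim\cM_{-X,k}$ and $\phi:\llbracket -X,k\rrbracket\to w(\llbracket -X,k\rrbracket)$ the increasing bijection, and $w_1=w$ on $\llbracket k+1,X\rrbracket$; finally run the multi-species ASEP on $\llbracket -X+k,X\rrbracket$ from $w_1$ to stationarity to obtain $\uetad_t$, so that $\uetad_t=\psi\circ v'$ on $\llbracket -X+k,X\rrbracket$ with $v'\sim\cM_{-X+k,X}$ independent of $(w,v)$ and $\psi:\llbracket -X+k,X\rrbracket\to w_1(\llbracket -X+k,X\rrbracket)$ the increasing bijection, and $\uetad_t=w_1$ on $\llbracket -X,-X+k-1\rrbracket$.

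Next I rewrite the two conditions defining $\cEd$ in terms of $\uetad_t$. The first, $h\{\etad_t^{-X+k-1}\}(N-k+a)=N-k+a$, is equivalent to $\uetad_t(y)\ge N-k+a+1$ for all $y\in\llbracket -X,-X+k-1\rrbracket$, hence to $w_1(\llbracket -X,-X+k-1\rrbracket)\subseteq\llbracket N-k+a+1,X\rrbracket$. Under this, $w_1(\llbracket -X+k,X\rrbracket)$ contains every value in $\llbracket -X,N-k+a\rrbracket$, which pins $\psi$ down so that $\psi(z)\le N-k+a$ if and only if $z\le N+a$. The second condition $h\{\etad_t^{-1+k-N}\}(N-k+a)<N-k-a$ says $|\{y\in\llbracket -X,k-N-1\rrbracket:\uetad_t(y)\le N-k+a\}|>a$, so, combined with the first, it reduces to the $v'$-event: more than $a$ positions $y\in\llbracket -X+k,k-N-1\rrbracket$ satisfy $v'(y)\le N+a$.

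The core step is to bound the conditional probability of this $v'$-event by $Cq^{a/2}$. Set $u'(y)=\don[v'(y)\le N+a]$; then $u'\sim\cP_{-X+k,X}^{N+a+X-k+1}$, whose ground state places particles on $\llbracket k-N-a,X\rrbracket$ and holes on $\llbracket -X+k,k-N-a-1\rrbracket$. Let $m_L$ denote the number of particles of $u'$ in $\llbracket -X+k,k-N-1\rrbracket$; the event is $m_L\ge a+1$, and particle conservation then forces at least $m_L-a\ge 1$ extra holes in the block $\llbracket k-N,X\rrbracket$. Since every element of $\llbracket -X+k,k-N-1\rrbracket$ lies strictly to the left of every element of $\llbracket k-N,X\rrbracket$, each of the $m_L(m_L-a)$ pairs (particle in the window, hole in the right block) contributes an inversion, giving $\kappa(u')\ge m_L(m_L-a)\ge a+1$. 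Lemma \ref{lem:invbd} bounds the $v'$-event by $Cq^{a/2}$, and integrating out $(w,v)$ using independence of $v'$ gives $\PP[\cEd]\le Cq^{a/2}$.

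The main obstacle is establishing the inversion lower bound $\kappa(u')\ge a+1$ rather than the naive $\kappa(u')\ge 1$ one would get by counting just a single particle–hole deviation from the ground state. The boost comes because condition (i) rigidifies $\psi$ so that the $v'$-threshold is exactly $N+a$; this forces any extra hole to sit in the far-right block $\llbracket k-N,X\rrbracket$ (not merely somewhere in $\llbracket k-N-a,X\rrbracket$), after which the $a$ ground-state particles in $\llbracket k-N-a,k-N-1\rrbracket$ contribute $a$ further inversions "for free" with that hole. This rigid-positioning mechanism plays the role here that the $b$-parameter and the $1$-Lipschitz argument played in the proof of Lemma \ref{lem:odot}.
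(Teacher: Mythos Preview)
Your proposal is correct and follows essentially the same approach as the paper's proof: use \eqref{eq:dseq} to realize $\uetad_t$ via the two-stage ``bring into $Q$-equilibrium'' construction, translate the two height conditions of $\cEd$ into statements about $w_1$ and $v'$, observe that the first condition forces $\psi(z)\le N-k+a\Leftrightarrow z\le N+a$, and then bound the resulting $v'$-event by a $\cP_{-X+k,X}^{N+a+X-k+1}$ energy estimate via Lemma~\ref{lem:invbd}. The only cosmetic differences are that the paper works with the complementary count (existence of $j\in\llbracket k-N,X\rrbracket$ with $v(j)\ge N+a+1$) rather than your particle-conservation phrasing, and obtains the weaker but sufficient lower bound $\kappa(u')\ge a+1$ instead of your $m_L(m_L-a)$; also, your intermediate bijection $\phi$ on $\llbracket -X,k\rrbracket$ is never actually used and could be omitted.
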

\begin{proof}
Assuming $\cEd$, we must have
\[
|\{x\in\Z: N-k+a+1\le x \le X, \etad_t(x)\le -X+k-1\}|=k,
\]
and
\[
|\{x\in\Z: N-k+a+1\le x \le X, \etad_t(x)\le -1+k-N\}|<X-N+k-a.
\]
We let $\uetad_t$ be the inverse of $\etad_t$ (as a group element in $S_{-X,X}$).
Then the above two relations are equivalent to
\begin{equation}  \label{eq:uetad1}
|\{x\in\Z: \uetad_t(x)\ge N-k+a+1, -X\le x\le -X+k-1\}|=k,    
\end{equation}
and
\begin{equation}  \label{eq:uetad2}
|\{x\in\Z: \uetad_t(x)\ge N-k+a+1, -X\le x\le -1+k-N\}|<X-N+k-a,
\end{equation}
respectively. By \eqref{eq:dseq}, the law of $\uetad_t$ is given by $M_{-X+k,X}M_{-X,k}W_{-X,X}(t)$, meaning that we can first sample $w$ from the law given by $W_{-X,X}(t)$, and then sample $\uetad_t$ from the law given by $M_{-X+k,X}M_{-X,k}T_w$.
Equivalently, $\uetad_t$ can be obtained from $w$, by first running the (multi-species) ASEP on $\llbracket -X,k\rrbracket$ to time $\infty$ to get $w'$, then running the (multi-species) ASEP on $\llbracket -X+k,X\rrbracket$ (from $w'$) to time $\infty$ to get $\uetad_t$.

From this construction of $\uetad_t$, we have that $\uetad_t$ and $w'$ are the same in $\llbracket -X, -X+k-1\rrbracket$. Then the event in \eqref{eq:uetad1} is equivalent to that 
\[
\uetad_t(x)=w'(x)\ge N-k+a+1, \quad \forall x\in\llbracket -X, -X+k-1\rrbracket,
\]
and this is further equivalent to that
\begin{equation}  \label{eq:dtxbd}
|\{x\in\llbracket -X+k, X\rrbracket: w'(x)\le N-k+a\}|= N-k+a+X+1.
\end{equation}

Let $\phi:\llbracket -X+k,X\rrbracket\to\{w'(x):x\in\llbracket -X+k,X\rrbracket\}$ be the unique increasing bijection determined by $w'$ in $\llbracket -X+k,X\rrbracket$.
Then $\uetad_t$ in $\llbracket -X+k, X\rrbracket$ can be written as $\phi\circ v$, for $v$ sampled from $\cM_{-X+k,X}$.
Under the event in \eqref{eq:dtxbd}, we have that $\phi(N+a)\le N-k+a$ since $\phi$ is increasing.
The event in \eqref{eq:uetad2} is equivalent to that 
\[
\exists x\in\llbracket k-N,X\rrbracket,\quad\uetad_t(x)=\phi(v(x))\ge N-k+a+1.
\]
Then under the events in \eqref{eq:dtxbd} and \eqref{eq:uetad2}, there exists $x\in\llbracket k-N, X\rrbracket$ such that $v(x)\ge N+a+1$.
This further implies that
\[\sum_{-X+k\le i<j\le X} \don[v(i)\le N+a, v(j)\ge N+a+1]
\ge \sum_{i\in \llbracket -X+k, k-N-1\rrbracket}\don[v(i)\le N+a]
\ge a+1.\]
Note that $x\mapsto \don[v(x)\le N+a]$ has distribution $\cP_{-X+k,X}^{N+a+X-k+1}$, so by Lemma \ref{lem:invbd} the probability of this event is at most $Cq^{a/2}$, for some $C>0$ depending only on $q$.
Thus the conclusion follows.
\end{proof}
Putting all these estimates together we can now upper bound $\PP[\cA_{N,k,t}^{a,b}]$.
\begin{proof}[Proof of Proposition \ref{prop:hec}]
From the definition of $\cEo$ and $\cEd$, we have that $\PP[\cA_{N,k,t}^{a,b,+}]\le \limsup_{X\to\infty}\PP[\cEo\cup\cEd]\le \limsup_{X\to\infty}\PP[\cEo]+\PP[\cEd]$.
Then by Lemmas \ref{lem:odot} and \ref{lem:ddot}, we have $\PP[\cA_{N,k,t}^{a,b,+}]\le Cq^{(b/4-a/2)\wedge a/2}$, for $C$ depending only on $q$.
By symmetry the same upper bound holds for $\PP[\cA_{N,k,t}^{a,b,-}]$.
Thus the conclusion follows.
\end{proof}

\section{Hitting time bound}   \label{sec:hit}
In this section we prove Proposition \ref{prop:hitrp}, for which we need the following known estimate on the `ground state hitting time' of the single-species ASEP.

For any $n\in\Z$ and $m\in \N$, we let $\bta^{n,m}=(\eta^{n,m}_t)_{t\ge 0}$ be the (single-species) ASEP, with initial condition given by
$\eta^{n,m}_0(x)=1$ for $x\in\llbracket n-m+1,n\rrbracket$ or $x\in \llbracket n+m+1,\infty\rrbracket$, and $\eta^{n,m}_0(x)=0$ for $x\in\llbracket -\infty,n-m\rrbracket$ or $x\in \llbracket n+1,n+m\rrbracket$.

The following estimate is implied by \cite[Theorem 1.9]{benjamini2005mixing}, which is also used in \cite{bufetov2022cutoff}.
\begin{lemma} \label{lem:mix05}
There is a constant $C_*>0$ depending only on $q$, such that the following is true.
Let $H>0$ be the smallest number with $\eta^{n,m}_H(x)=\don[x\ge n+1]$ for any $x\in\Z$.
Then $\PP[H>C_*m]<1/m$.
\end{lemma}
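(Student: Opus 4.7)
The plan is to reduce the statement to \cite[Theorem~1.9]{benjamini2005mixing}, which bounds the hitting time of the single-species ASEP on a finite interval to its ground state. First, by the translation invariance of the ASEP dynamics on $\Z$, I may assume without loss of generality that $n = 0$. The initial configuration $\eta^{0,m}_0$ then has particles on $\llbracket -m+1, 0\rrbracket \cup \llbracket m+1, \infty\rrbracket$ and the target state is the step profile $x \mapsto \don[x \ge 1]$; notice that they already agree on $\llbracket -\infty, -m\rrbracket \cup \llbracket m+1, \infty\rrbracket$, so the initial discrepancy is confined to the finite window $\llbracket -m+1, m\rrbracket$.

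Second, I couple $\eta^{0,m}$ with a finite-interval ASEP $\tilde\eta$ on $\llbracket -m+1, m\rrbracket$ carrying $m$ particles, started in the anti-ground state $\tilde\eta_0(x) = \don[-m+1 \le x \le 0]$, so that its ground state (particles on $\llbracket 1, m\rrbracket$) matches the target of $\eta^{0,m}$ restricted to the window. Running the two processes under the basic coupling (shared Poisson clocks on all shared bonds), I use the height-function monotonicity developed in the proof of Lemma~\ref{lem:hei-ord1} to argue that at the first time $\tilde H$ when $\tilde\eta$ reaches its ground state, the height function of $\eta^{0,m}$ is forced to attain the maximal piecewise-linear profile on the window, hence $\eta^{0,m}_{\tilde H}$ agrees with the target both inside and outside the window. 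Theorem~1.9 of \cite{benjamini2005mixing} applied to $\tilde\eta$ delivers $\PP[\tilde H > C_* m] < 1/m$ for some $C_* = C_*(q)$, giving the claimed bound on $H$.

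The delicate point is establishing $H \le \tilde H$ rigorously under basic coupling, since Poisson events on the boundary bonds $(m, m+1)$ and $(-m, -m+1)$ affect $\eta^{0,m}$ but not $\tilde\eta$: e.g., a rate-$q$ firing on $(m, m+1)$ pulls a particle from the infinite right reservoir into the window, which can momentarily lower $h\{\eta^{0,m}\}$ near $m$ relative to $h\{\tilde\eta\}$, potentially breaking the naive height-function ordering. I expect the clean way to handle this is to exploit the fact that ground-state attainment is equivalent to the height function reaching its pointwise maximum under the ASEP asymptotics: once $h\{\tilde\eta\}$ reaches this maximum on the window, a one-sided monotonicity argument (height functions cannot exceed the maximum) combined with the matching values on $\llbracket -\infty,-m\rrbracket \cup \llbracket m+1,\infty\rrbracket$ forces $h\{\eta^{0,m}\}$ to also equal the maximum on all of $\Z$. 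A robust backup, if this direct route proves too fragile, is to embed both processes in a large finite interval $\llbracket -L, L\rrbracket$ with $L \gg C_* m$: since disturbances in the ASEP interface propagate at bounded speed, with probability $1 - e^{-\Omega(L-m)}$ no disturbance from outside reaches the window during time $C_* m$, and Theorem~1.9 applied to this enlarged ASEP (whose relevant disagreement width is still $2m$) yields the bound with the same $C_*(q)$.
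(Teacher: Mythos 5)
The paper does not actually give a proof of this lemma: it simply asserts that the statement is implied by \cite[Theorem~1.9]{benjamini2005mixing} and moves on, so there is no in-paper argument to compare against line by line. Your attempt to supply the missing reduction via a finite-window ASEP is therefore a genuinely new argument, and it has a gap that you partially sense but do not close.

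The problem is that the domination $H \le \tilde H$ under the basic coupling is simply false, not merely delicate. Take $m=1$ and $n=0$, so the window is $\llbracket 0,1 \rrbracket$, $\eta^{0,1}_0$ has a particle at $0$, a hole at $1$, holes on $\llbracket -\infty,-1 \rrbracket$, particles on $\llbracket 2,\infty \rrbracket$, and $\tilde\eta_0$ has a particle at $0$ and a hole at $1$. With positive probability the first relevant Poisson point is a rate-$q$ ring on bond $(1,2)$: this moves a particle from $2$ to $1$ in $\eta^{0,1}$ (now sites $0$ and $1$ are both occupied and $2$ is empty) and does nothing to $\tilde\eta$. With positive probability the next relevant ring is a rate-$1$ clock on bond $(0,1)$: $\tilde\eta$ jumps to its ground state, so $\tilde H$ is this time, but $\eta^{0,1}$ is unchanged (both sites occupied) and is not the step configuration, so $H > \tilde H$. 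This kills the deterministic coupling bound your argument rests on. Your Fix~1 does not help: the ``matching values on $\llbracket -\infty,-m\rrbracket \cup \llbracket m+1,\infty\rrbracket$'' is precisely what a rate-$q$ firing on $(m,m+1)$ or $(-m,-m+1)$ can destroy, and the observation that height functions are bounded above by the target profile only gives an upper bound on $h\{\eta^{0,m}_t\}$, whereas your argument needs a lower bound. (In fact even the upper bound $h\{\eta^{0,m}_t\}(x)\le -|x|$ is not preserved for all $x$ by the dynamics.) Your Fix~2 leaves the crucial point unexplained: after embedding in $\llbracket -L,L\rrbracket$, applying \cite[Theorem~1.9]{benjamini2005mixing} to that chain gives a hitting-time bound whose scale depends on how the theorem is phrased; if it gives $O(L)$ the enlargement is useless, while if it gives a bound in terms of the initial disagreement width one would not need any coupling detour in the first place and could invoke the theorem for $\eta^{n,m}$ directly, which is evidently what the paper's one-line citation intends. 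In short, the reduction to a finite ASEP on the small window via height-function monotonicity, which is the backbone of your write-up, does not work, and the two suggested repairs do not repair it.
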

Below we also assume that $C_*\in\Z$, since otherwise we can replace it by $\lceil C_*\rceil$.

Recall the setting of Proposition \ref{prop:hitrp} and the parameters $N,k,t,a$.
The idea of proving Proposition \ref{prop:hitrp} is to split the time interval $[0,\log(N)^4]$ into segments each with length $C_*a$, and use Lemma \ref{lem:mix05} to conclude that the probability of not reaching the `ground state' in each interval is at most $1/a$.
Then we can multiply these probabilities for all the intervals, since that the ASEP is Markovian.

For each $i\in \Z_{\ge 0}$, we let $\cE_i$ be the event where
\[
h\{\xi_{t+iC_*a}^k\}(N-k-a)=h\{\xi_{t+iC_*a}^k\}(N-k+a)=N-k-a,
\]
and $\cE_i'$ be the event where
\[
h\{\xi_{t+iC_*a+s}^k\}(N-k)<N-k,\quad \forall s\in [0, C_*a],
\]
i.e. $\xi_{t+iC_*a+s}^k$ is not at the `ground state' for any $s\in [0, C_*a]$.
Here $C_*$ is the constant in Lemma \ref{lem:mix05}.
\begin{lemma}  \label{lem:ceibd}
We have $\PP[\cE_i'\mid\xi_{t+iC_*a}^k]\le 1/a$ for any $\xi_{t+iC_*a}^k$ under the event $\cE_i$.
\end{lemma}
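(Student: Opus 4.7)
The plan is to couple, conditional on $\xi_{t+iC_*a}^k$ satisfying $\cE_i$, the future evolution of $\bxi^k$ on $[N]$ with the reference process $\bta^{N-k,a}$ from Lemma \ref{lem:mix05} on $\Z$, so that whenever $\bta^{N-k,a}$ reaches its ground state within time $C_*a$, so does $\bxi^k$. Combined with the estimate $\PP[H>C_*a]<1/a$ from Lemma \ref{lem:mix05} (applied with $m=a$), this gives the claimed bound.

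First, I would translate $\cE_i$ into a structural statement about $\xi_{t+iC_*a}^k$. Since $h\{\xi_{t+iC_*a}^k\}$ is $1$-Lipschitz and pinned at $h(0)=0$, $h(N)=N-2k$, the two equalities $h\{\xi_{t+iC_*a}^k\}(N-k-a)=h\{\xi_{t+iC_*a}^k\}(N-k+a)=N-k-a$ force $\xi_{t+iC_*a}^k$ to be all holes on $\llbracket 1,N-k-a\rrbracket$ and all particles on $\llbracket N-k+a+1,N\rrbracket$, with an arbitrary interleaving of $a$ particles and $a$ holes inside the active window $\llbracket N-k-a+1,N-k+a\rrbracket$. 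Since $\eta_0^{N-k,a}$ places all $a$ particles to the left of all $a$ holes in the same window, a direct computation yields $h\{\eta_0^{N-k,a}\}\le h\{\xi_{t+iC_*a}^k\}$ pointwise on $\Z$: the two height functions agree outside the window, while inside it $\eta_0^{N-k,a}$ realizes the minimal $1$-Lipschitz interpolation between the matched boundary values.

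Next, I would couple the two processes via the basic coupling, sharing Poisson clocks on edges inside $\llbracket 1,N-1\rrbracket$ and giving $\bta^{N-k,a}$ independent fresh clocks on the remaining edges of $\Z$. On shared edges, the monotonicity of the basic coupling, used in the same spirit as Lemma \ref{lem:hei-ord1}, preserves $h\{\xi_{t+iC_*a+s}^k\}\ge h\{\eta_s^{N-k,a}\}$: a case analysis at each candidate swap uses that the pointwise difference $h\{\xi^k\}-h\{\eta^{N-k,a}\}$ is a non-negative even integer, which rules out the only problematic case of a local minimum of $h\{\eta^{N-k,a}\}$ occurring at a point where the two heights coincide. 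For clocks exclusive to $\bta^{N-k,a}$ on edges outside $[N]$, $h\{\xi^k\}$ is pinned to its linear envelopes ($x$ for $x\le 0$ and $2N-2k-x$ for $x\ge N$), while the identity $h\{\eta_s^{N-k,a}\}(x)=x-2\,\#\{y\le x:\eta_s^{N-k,a}(y)=1\}$ (and its mirror for large $x$) together with non-negativity of the relevant counts keeps $h\{\eta_s^{N-k,a}\}$ below these envelopes at all times.

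Thus $h\{\eta_s^{N-k,a}\}\le h\{\xi_{t+iC_*a+s}^k\}$ persists for every $s\ge 0$. Taking $s=H$ gives $h\{\xi_{t+iC_*a+H}^k\}(N-k)\ge h\{\eta_H^{N-k,a}\}(N-k)=N-k$, which combined with the universal upper bound $h\{\xi^k\}(N-k)\le N-k$ forces equality, meaning $\xi_{t+iC_*a+H}^k$ is at its ground state. Hence $\cE_i'$ implies $H>C_*a$, and Lemma \ref{lem:mix05} concludes. The main technical obstacle is the verification that the Poisson clocks exclusive to $\bta^{N-k,a}$ on edges outside $[N]$ do not break the ordering, which is precisely what the particle-count identity in the previous paragraph handles.
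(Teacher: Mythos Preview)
Your proposal is correct and follows essentially the same approach as the paper: both couple $(\xi_{t+iC_*a+s}^k)_{s\ge 0}$ with $\bta^{N-k,a}$ under the basic coupling, verify the initial height-function ordering $h\{\eta_0^{N-k,a}\}\le h\{\xi_{t+iC_*a}^k\}$ from the pinning given by $\cE_i$, propagate this ordering in time, and then invoke Lemma~\ref{lem:mix05} with $m=a$. The paper simply defers the preservation-of-ordering step to ``arguments similar to those in the proof of Lemma~\ref{lem:hei-ord1}'', whereas you spell out the check on edges outside $[N]$ via the particle-count identities; this extra detail is sound but not a different route.
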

\begin{proof}
Note that $h\{\eta^{N-k,a}_0\}(N-k-a)=h\{\eta^{N-k,a}_0\}(N-k+a)=N-k-a$, and $h\{\eta^{N-k,a}_0\}$ has slope $1$ in $\llbracket -\infty, N-k-a\rrbracket \cup \llbracket N-k, N-k+a\rrbracket$, and has slope $-1$ in $\llbracket N-k-a, N-k\rrbracket \cup \llbracket N-k+a, \infty\rrbracket$.
Thus assuming $\cE_i$, we must have $h\{\eta^{N-k,a}_0\} \le h\{\xi_{t+iC_*a}^k\}$ since $h\{\xi_{t+iC_*a}^k\}$ is $1$-Lipschitz.

We couple $(\xi_{t+iC_*a+s}^k)_{s\ge 0}$ and $\bta^{N-k,a}$ under the basic coupling.
Then $h\{\eta^{N-k,a}_s\} \le h\{\xi_{t+iC_*a+s}^k\}$ for any $s\ge 0$, since (by arguments similar to those in the proof of Lemma \ref{lem:hei-ord1}) ordering of these height functions is preserved under the basic coupling.

Now by Lemma \ref{lem:mix05}, with probability $\ge 1-1/a$ we have that $\eta^{N-k,a}_s(x)=\don[x\ge N-k+1]$ for any $x\in\Z$, thus $h\{\eta^{N-k,a}_s\}(N-k)=N-k$, for some $s\in[0,C_*a]$. This implies that we must have  $h\{\xi_{t+iC_*a+s}^k\}(N-k)=N-k$, so $\cE_i'$ does not hold.
Thus the conclusion follows.
\end{proof}

\begin{proof}[Proof of Proposition \ref{prop:hitrp}]
Under the event $\cB_{N,k,t}^{a,\log(N)^4}$, we must have that $\cE_i$ and $\cE_i'$ hold, for each $i\in\llbracket 0, \log(N)^4/(C_*a)-1 \rrbracket$.
By Lemma \ref{lem:ceibd} and that the ASEP is Makovian, we must have that
\[
\PP\Big[\cE_i'\mid \bigcap_{0\le j\le i}\cE_i \cap \bigcap_{0\le j< i}\cE_i'\Big] \le 1/a.
\]
Thus by taking the product of this over $i\in \llbracket 0, \log(N)^4/(C_*a)-1 \rrbracket$, and
using that $2\le a\le \log(N)^2$, we have $\PP[\cB_{N,k,t}^{a,\log(N)^4}]<a^{-\lfloor \log(N)^4/(C_*a)\rfloor}<e^{-\log(2)\lfloor \log(N)^2/C_*\rfloor}$. So the conclusion follows.
\end{proof}

\section{ASEP shift-invariance and convergence to the KPZ fixed point}   \label{sec:asepk}
In this section we deduce Proposition \ref{prop:asepi}.

The first input we need is the shift-invariance property of the multi-specie ASEP, which is degenerated from the shift-invariance property of the colored six-vertex model from \cite{borodin2019shift, galashin2021symmetries}.
Namely, we obtain the following result.
Recall $\bzeta$ (the multi-species ASEP on $\Z$) and its projections from Section \ref{ssec:mset}.
\begin{prop}  \label{prop:shinv}
For any $N\in\N$ and $b\in\R$, $t\ge 0$, the event
\[
h\{\zeta^k_t\}(N-k)>N-k+b,\quad \forall k\in \llbracket 0,N\rrbracket,
\]
has the same probability as the event
\[
h\{\zeta^0_t\}(N-2k)>N+b,\quad \forall k\in \llbracket 0,N\rrbracket.
\]
\end{prop}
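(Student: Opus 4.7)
The plan is to derive Proposition~\ref{prop:shinv} as a direct application of the multi-species ASEP shift-invariance of \cite{borodin2019shift,galashin2021symmetries}, which is the ASEP specialization of the shift-invariance of the colored stochastic six-vertex model. In the relevant form, this result yields a joint distributional identity for colored height functions of the multi-species ASEP with identity initial condition at a collection of color--position pairs, under a basic shift that sends a pair $(c,y)$ to $(c-1,y-1)$ together with a compensating change in the recorded height value.

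First I would reformulate both events uniformly. Set $H_c(y) := h\{\zeta^c_t\}(y)$, so that the first event becomes $\bigcap_{k=0}^N\{H_k(N-k) > N-k+b\}$ and the second becomes $\bigcap_{k=0}^N\{H_0(N-2k) > N+b\}$. Adding $k$ to the $k$-th inequality in the first family, the proposition reduces to the joint distributional identity
\[
\bigl(H_k(N-k) + k\bigr)_{k=0}^N \;\doeq\; \bigl(H_0(N-2k)\bigr)_{k=0}^N.
\]
Each coordinate already agrees marginally: translation covariance of the identity initial condition, $\zeta_t(x+c) - c \doeq \zeta_t(x)$ as a space-time process, immediately implies $H_c(y) + c \doeq H_0(y-c)$ for each fixed $c$. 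The non-trivial content is the joint law across the different colors $k=0,\ldots,N$, and this is exactly what the cited shift-invariance supplies.

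To conclude, I would apply the shift-invariance iteratively to the staircase configuration $\{(k, N-k)\}_{k=0}^N$. Each elementary shift moves one pair $(c,y)$ to $(c-1, y-1)$ and changes the associated height from $H_c(y)$ to $H_{c-1}(y-1) - 1$; applied $k$ times to the $k$-th pair (while leaving the others fixed along the way), it transports $(k, N-k)$ to $(0, N-2k)$ and carries $H_k(N-k) + k$ into $H_0(N-2k)$. The principal technical point, and the main obstacle, is to verify that every intermediate color--position configuration produced by the iteration satisfies the non-crossing / staircase compatibility condition required by the shift-invariance theorems of \cite{borodin2019shift, galashin2021symmetries}. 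The initial arrangement $\{(k, N-k)\}_{k=0}^N$ and the final arrangement $\{(0, N-2k)\}_{k=0}^N$ are both monotone (color non-decreasing, position strictly decreasing along the index $k$), and this monotonicity is preserved by each elementary shift, so I expect the compatibility check to be routine; nonetheless, it is where the careful translation between the six-vertex formulation of the cited theorems and the ASEP formulation used here must be done.
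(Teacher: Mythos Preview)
Your proposal is correct and follows the same approach as the paper: both reduce the proposition to the shift-invariance of the colored six-vertex model from \cite{borodin2019shift,galashin2021symmetries}, transferred to the multi-species ASEP via the standard limit transition. In fact the paper gives strictly less detail than you do---it states that the proposition ``is deduced directly from a special case'' of those theorems and then explicitly writes ``We omit the proof''; your reformulation $(H_k(N-k)+k)_{k=0}^N \doeq (H_0(N-2k))_{k=0}^N$ and the iterative shifting scheme are a reasonable way to fill in what the paper leaves implicit, and the monotonicity you flag is indeed preserved at each intermediate step if one shifts the observables with the largest remaining color first (equivalently, shifts the block $k\ge j$ at step $j$).
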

This is deduced directly from a special case of \cite[Theorem 1.2]{borodin2019shift} or \cite[Theorem 1.6]{galashin2021symmetries}, using the limit transition from the colored six-vertex model to the multi-species ASEP. Such transition was first observed in \cite{borodin2016stochastic}, and then proved in \cite{aggarwal2017convergence} in the colorless setting; the colored version follows the same proof.
See also \cite[Section 3.1]{zhang2021shift}.
We omit the proof of Proposition \ref{prop:shinv}.

Now we just need to analyze the process $\bzeta^0=(\zeta^0_t)_{\ge 0}$, which is the single-species ASEP with step initial condition.
For this, we use the convergence of the single-species ASEP to the KPZ fixed point, which is a random process $\fh:\R_{\ge 0}\times \R\to\R$ and can be described as a continuous time Markov chain $\fh(t,\cdot)$ with explicit transition probabilities.
The state space of this Markov chain is the upper semi-continuous (UC) space, containing all upper semi-continuous functions $f:\R\to[-\infty,\infty)$ with $f(x)<C(|x|+1)$ for some $C<\infty$ and $f(x)>-\infty$ for some $x$.
We refer the readers to \cite[Definition 3.12]{matetski2016kpz} for the precise definition of the KPZ fixed point.

We need the following convergence of the single-species ASEP to the KPZ fixed point.
\begin{prop}[\protect{\cite[Theorem 2.2(2)]{quastel2022convergence}}]  \label{prop:qs22}
For each $\epsilon>0$, let $\bta^\epsilon=(\eta^\epsilon_t)_{t\ge 0}$ be the (single-species) ASEP with deterministic initial configuration $\eta^\epsilon_0$, such that $\eta^\epsilon_0(x)$ stabilizes as $x\to-\infty$ (so the its height function is well-defined); and let $g_\epsilon\in\R$.
Let $\fh_0:\R\to\R$ be continuous with $|\fh_0(x)|<C(1+|x|^{1/2})$ for some $C<\infty$.
Suppose that
\[
x\mapsto -\epsilon^{1/2}h\{\eta^\epsilon_0\}(2\epsilon^{-1}x) + g_\epsilon
\]
converges to $\fh_0$, uniformly on compact sets. Then for any $t>0$,
\[
x\mapsto -\epsilon^{1/2}h\{\eta^\epsilon_{2\epsilon^{-3/2}t}\}(2\epsilon^{-1}x) + g_\epsilon + \epsilon^{-1}(1-q)t
\]
converges to $\fh((1-q)t,\cdot)$ in distribution, in the uniform on compact sets topology, with $\fh$ being the KPZ fixed point with initial data $\fh(0,\cdot)=\fh_0$.
Here we regard $h\{\eta^\epsilon_0\}$ and $h\{\eta^\epsilon_{2\epsilon^{-3/2}t}\}$ as functions on $\R$, by linearly interpolating between integers.
\end{prop}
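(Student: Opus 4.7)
The statement is presented as a direct quotation of \cite[Theorem 2.2(2)]{quastel2022convergence}, so my plan is to invoke that result as a black box rather than reprove it. Concretely, the ``proof'' reduces to verifying that the hypotheses match: $\bta^\epsilon$ is an ASEP with deterministic initial configuration whose height function is well-defined (requiring only that $\eta^\epsilon_0(x)$ stabilizes as $x \to -\infty$), the rescaled initial height function converges to a continuous $\fh_0$ of at most $|x|^{1/2}$ growth in the uniform-on-compact-sets topology, and the parameter $q$ lies in $[0,1)$. Once these are in place, the convergence at later times follows directly from the cited theorem.

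For context, the strategy in \cite{quastel2022convergence} is to push through the program of \cite{matetski2016kpz}, which established the analogous convergence for TASEP, to general $q \in [0,1)$. The essential inputs are exact contour-integral formulas for multi-point ASEP distributions (expressible as Fredholm determinants whose kernels encode the initial data), together with asymptotic analysis of these kernels under the $1{:}2{:}3$ KPZ scaling. The deterministic linear trend $\epsilon^{-1}(1-q)t$ subtracted in the normalization is precisely the hydrodynamic speed, so that the remaining $\epsilon^{1/2}$-scale fluctuations converge to the transition kernels of the KPZ fixed point of \cite[Definition 3.12]{matetski2016kpz}. Process-level (rather than finite-dimensional) convergence in the uniform-on-compact topology requires a separate tightness estimate for the rescaled height function on the UC space.

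The main obstacle in such a program, relative to the TASEP case, is the absence of the biorthogonal ensemble structure that made \cite{matetski2016kpz} tractable; in its place one must handle $q$-deformed Cauchy-type kernels whose steepest-descent analysis is substantially more delicate. Since for our purposes (Section \ref{sec:asepk}) we only need the one-time, UC-topology statement of Proposition \ref{prop:qs22} applied with a specific step-type initial data arising from the shift-invariance reduction of Proposition \ref{prop:shinv}, we simply quote the theorem as stated without revisiting its proof.
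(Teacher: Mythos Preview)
Your proposal is correct and matches the paper's treatment: Proposition \ref{prop:qs22} is stated in the paper as a direct citation of \cite[Theorem 2.2(2)]{quastel2022convergence} with no accompanying proof, so invoking it as a black box is exactly what the paper does. Your additional paragraph sketching the strategy of \cite{quastel2022convergence} is informative but not required.
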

Using Propositions \ref{prop:shinv} and \ref{prop:qs22}, we now prove the following statement, which directly implies Proposition \ref{prop:asepi}.
\begin{lemma}  \label{lem:convtogoe}
For any $\tau\in\R$, we have that
\[
-2^{2/3}N^{-1/3}\min\{h\{\zeta^k_{2(1-q)^{-1}(N+\tau N^{1/3})}\}(N-k)+k: k\in\llbracket 0,N\rrbracket\}+2^{2/3}N^{2/3}+2^{2/3}\tau
\]
converges in distribution to the GOE Tracy-Widom distribution, as $N\to\infty$.
\end{lemma}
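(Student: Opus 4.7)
The strategy is to reduce the multi-$k$ minimum to a one-dimensional problem via Proposition~\ref{prop:shinv}, apply Proposition~\ref{prop:qs22} to take a KPZ scaling limit, and identify the limit with $F_\GOE$ using Johansson's formula $\PP(\max_v(\cA_2(v)-v^2)\le s)=F_\GOE(2^{2/3}s)$, where $\cA_2$ denotes the Airy$_2$ process.

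\textbf{Reduction and mean profile.} Proposition~\ref{prop:shinv} states that the events $\{h\{\zeta^k_t\}(N-k)+k>N+b\,\forall k\}$ and $\{h\{\zeta^0_t\}(N-2k)>N+b\,\forall k\}$ have the same probability for every $b\in\R$, so the two sides of
\[
\min_{k\in\llbracket 0,N\rrbracket}\bigl[h\{\zeta^k_t\}(N-k)+k\bigr]\ \stackrel{d}{=}\ \min_{k\in\llbracket 0,N\rrbracket} h\{\zeta^0_t\}(N-2k)
\]
are equal in distribution. Here $\bzeta^0$ is the single-species ASEP with step initial condition $\zeta^0_0(x)=\don[x\le 0]$, so $h\{\zeta^0_0\}(x)=|x|$. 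The ASEP hydrodynamic limit gives the parabolic mean profile $\Phi(y,t):=\tfrac{(1-q)t}{2}+\tfrac{y^2}{2(1-q)t}$ throughout the rarefaction fan, and with $t=2(1-q)^{-1}(N+\tau N^{1/3})$ this reduces to $\Phi(y,t)-N=\tau N^{1/3}+y^2/(4N)+o(N^{1/3})$: a parabola of minimum $\tau N^{1/3}$ at $y=0$ on spatial scale $N^{2/3}$. This pins the minimizer near the origin and fixes the normalization appearing in the lemma.

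\textbf{KPZ limit and identification.} Next I would apply Proposition~\ref{prop:qs22} with $\epsilon=c_qN^{-2/3}$ (chosen so that $2\epsilon^{-3/2}$ and $2\epsilon^{-1}$ match the $N$ and $N^{2/3}$ scales). Writing $y=uN^{2/3}$, this yields
\[
N^{-1/3}\bigl[h\{\zeta^0_t\}(uN^{2/3})-\Phi(uN^{2/3},t)\bigr]\ \Rightarrow\ -\alpha\,\cA_2(\beta u)
\]
on compact $u$-sets, for explicit $\alpha,\beta>0$ depending only on $q$. Parabolic-envelope tightness (the $u^2/4$ term dominating the Airy$_2$ fluctuation at large $|u|$) localizes the argmin to $|u|\le R$ with probability tending to $1$ as $R\to\infty$, permitting the minimum to be passed through the limit. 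Multiplying the quantity in the lemma by the factors in the statement, the left-hand side converges to $2^{2/3}\max_v\bigl(\alpha\cA_2(\beta v)-v^2/4\bigr)$; after a change of variable and the standard rescaling of $\cA_2$ inherited from the KPZ fixed point, the $q$-dependent constants collapse and the limit equals $2^{2/3}\max_v(\cA_2(v)-v^2)$, whose CDF is $F_\GOE$ by Johansson.

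\textbf{Main obstacle.} The delicate step is the KPZ convergence: Proposition~\ref{prop:qs22} requires continuous initial data with sub-root growth, whereas the step profile $h\{\zeta^0_0\}(x)=|x|$ blows up after KPZ rescaling (it is the narrow-wedge case not directly covered by the hypothesis). I would circumvent this either by applying Proposition~\ref{prop:qs22} from a short intermediate time $\delta N$, at which the profile has regularized into a parabola plus order-one Airy$_2$ fluctuations satisfying the growth hypothesis, and transferring the limit to the target time via slow decorrelation; or by invoking an already-established narrow-wedge ASEP-to-Airy scaling result available in \cite{quastel2022convergence}. A subsidiary but mechanical task is to verify that the $(1-q)$-dependent constants combine exactly with the $y^2/(4N)$ coefficient to produce Johansson's $2^{2/3}$ factor.
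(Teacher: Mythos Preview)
Your reduction via Proposition~\ref{prop:shinv} to $\min_{k}h\{\zeta^0_t\}(N-2k)$ matches the paper's first step. From there the two arguments diverge. The paper does \emph{not} attempt to take the spatial infimum of the narrow-wedge height function and invoke Johansson's variational formula. Instead it applies the skew time-reversibility of the ASEP (equation (1.4) of \cite{quastel2022convergence}): the event $\{h\{\zeta^0_t\}(N-2k)>N+b\ \forall k\}$ has the same probability as $\{h\{\eta_t^{N,*}\}(0)>2N+b\}$, where $\eta^{N,*}$ is an auxiliary ASEP whose initial height function is essentially flat on $[-N,N]$. So the minimum over $k$ becomes a single space point, and the initial data now rescales to the zero function, which \emph{does} satisfy the hypotheses of Proposition~\ref{prop:qs22}. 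The limit is $\fh(1,0)$ for flat initial data, i.e.\ a one-point marginal of Airy$_1$, and $2^{2/3}\fh(1,0)\sim F_{\GOE}$ follows from \cite{quastel2014airy}. No localization or tightness argument is needed, and no constants have to be tracked through a change of variables in Johansson's formula.

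Your route is the ``dual'' picture (narrow wedge at many points versus flat at one point), and it is morally correct, but the two obstacles you flag are real and not merely mechanical within the paper's framework. First, Proposition~\ref{prop:qs22} as stated genuinely excludes the narrow wedge; you would need either a separate narrow-wedge statement from \cite{quastel2022convergence} or the two-time argument you sketch, neither of which is set up in the paper. Second, passing $\min_{k\in\llbracket 0,N\rrbracket}$ through the limit requires control of the narrow-wedge height function over a spatial window of width $2N$, i.e.\ $|u|\lesssim N^{1/3}\to\infty$ in your rescaled coordinate; the uniform-on-compacts convergence in Proposition~\ref{prop:qs22} does not give this, and the parabolic-envelope tightness you invoke is an additional estimate (moderate-deviation control of ASEP away from the characteristic) that would have to be imported or proved separately. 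The skew time-reversal trick sidesteps both issues in one line, and is the main idea you are missing.
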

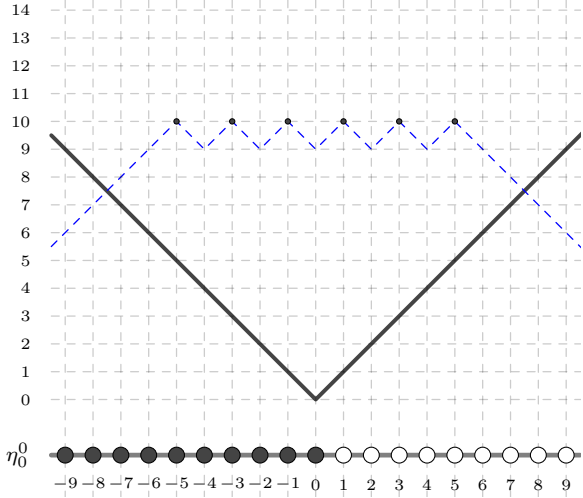
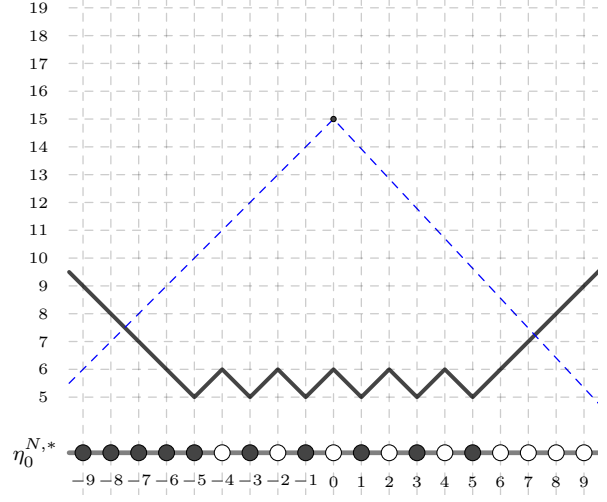
\begin{figure}[hbt!]
    \centering
\hfill
\begin{subfigure}[bt]{0.48\textwidth}
    \centering
\begin{tikzpicture}[line cap=round,line join=round,>=triangle 45,x=3.7cm,y=3.7cm]
\clip(-2.9,-0.25) rectangle (-0.75,1.55);

\draw [line width=1.5pt, color=uuuuuu] plot coordinates {(-2.65,1.05) (-1.7,0.1) (0,1.8)};

\draw [line width=0.5pt, color=blue, style=dashed] plot coordinates {(-2.65,0.65) (-2.2,1.1) (-2.1,1.0) (-2.0,1.1) (-1.9,1.0) (-1.8,1.1) (-1.7,1.0) (-1.6,1.1) (-1.5,1.0) (-1.4,1.1) (-1.3,1.0) (-1.2,1.1) (1,-1.1)};

\foreach \i in {4,...,30}
{
\draw [line width=0.5pt, opacity=0.2] [dashed] (\i/10-3,-0.6) -- (\i/10-3,5.6);
}
\foreach \i in {0,...,14}
{
\draw [line width=0.5pt, opacity=0.2] [dashed] (-2.65,\i/10+0.1) -- (1,\i/10+0.1);
}
\foreach \i in {0,...,19}
{
\begin{tiny}
\draw (-2.7,\i/10+0.1) node[anchor=east]{$\i$};
\end{tiny}
}

\draw [fill=uuuuuu] (-1.8,1.1) circle (1.0pt);
\draw [fill=uuuuuu] (-1.6,1.1) circle (1.0pt);
\draw [fill=uuuuuu] (-1.4,1.1) circle (1.0pt);
\draw [fill=uuuuuu] (-1.2,1.1) circle (1.0pt);
\draw [fill=uuuuuu] (-2.0,1.1) circle (1.0pt);
\draw [fill=uuuuuu] (-2.2,1.1) circle (1.0pt);

\draw [line width=1.8pt, opacity=0.5] (-2.65,-0.1) -- (-0.75,-0.1);
\draw [fill=uuuuuu] (-2.6,-0.1) circle (3.0pt);
\draw [fill=uuuuuu] (-2.5,-0.1) circle (3.0pt);
\draw [fill=uuuuuu] (-2.4,-0.1) circle (3.0pt);
\draw [fill=uuuuuu] (-2.3,-0.1) circle (3.0pt);
\draw [fill=uuuuuu] (-2.2,-0.1) circle (3.0pt);
\draw [fill=uuuuuu] (-2.1,-0.1) circle (3.0pt);
\draw [fill=uuuuuu] (-2.,-0.1) circle (3.0pt);
\draw [fill=uuuuuu] (-1.9,-0.1) circle (3.0pt);
\draw [fill=uuuuuu] (-1.8,-0.1) circle (3.0pt);
\draw [fill=uuuuuu] (-1.7,-0.1) circle (3.0pt);
\draw [fill=white] (-1.6,-0.1) circle (3.0pt);
\draw [fill=white] (-1.5,-0.1) circle (3.0pt);
\draw [fill=white] (-1.4,-0.1) circle (3.0pt);
\draw [fill=white] (-1.3,-0.1) circle (3.0pt);
\draw [fill=white] (-1.2,-0.1) circle (3.0pt);
\draw [fill=white] (-1.1,-0.1) circle (3.0pt);
\draw [fill=white] (-1.,-0.1) circle (3.0pt);
\draw [fill=white] (-0.9,-0.1) circle (3.0pt);
\draw [fill=white] (-0.8,-0.1) circle (3.0pt);
\begin{scriptsize}
\draw (-2.7,-0.1) node[anchor=east]{$\eta_0^0$};
\end{scriptsize}

\foreach \i in {-9,...,9}
{
\begin{tiny}
\draw (-1.7+\i/10,-0.25) node[anchor=south]{$\i$};
\end{tiny}
}

\end{tikzpicture}
\caption{
An illustration of the configuration $\eta_0^0$ and its height function $h\{\eta_0^0\}$.
The above points indicate the lower bounds for $h\{\eta_t^0\}$ given in \eqref{eq:equivr}. 
If these bounds hold, $h\{\eta_t^0\}$ is also lower bounded by the dashed blue function, since it is $1$-Lipschitz.
}  
\end{subfigure}
\hfill
\begin{subfigure}[bt]{0.48\textwidth}
    \centering
\begin{tikzpicture}[line cap=round,line join=round,>=triangle 45,x=3.7cm,y=3.7cm]
\clip(-2.9,-0.25) rectangle (-0.75,1.55);

\draw [line width=1.5pt, color=uuuuuu] plot coordinates {(-2.65,0.55) (-2.2,0.1) (-2.1,0.2) (-2,0.1) (-1.9,0.2) (-1.8,0.1) (-1.7,0.2) (-1.6,0.1) (-1.5,0.2) (-1.4,0.1) (-1.3,0.2) (-1.2,0.1) (3,4.3)};

\draw [line width=0.5pt, color=blue, style=dashed] plot coordinates {(-2.65,0.15) (-1.7,1.1) (1,-1.8)};

\foreach \i in {4,...,30}
{
\draw [line width=0.5pt, opacity=0.2] [dashed] (\i/10-3,-0.6) -- (\i/10-3,5.6);
}
\foreach \i in {5,...,19}
{
\draw [line width=0.5pt, opacity=0.2] [dashed] (-2.65,\i/10-0.4) -- (1,\i/10-0.4);
\begin{tiny}
\draw (-2.7,\i/10-0.4) node[anchor=east]{$\i$};
\end{tiny}
}

\draw [fill=uuuuuu] (-1.7,1.1) circle (1.0pt);

\draw [line width=1.8pt, opacity=0.5] (-2.65,-0.1) -- (-0.75,-0.1);
\draw [fill=uuuuuu] (-2.6,-0.1) circle (3.0pt);
\draw [fill=uuuuuu] (-2.5,-0.1) circle (3.0pt);
\draw [fill=uuuuuu] (-2.4,-0.1) circle (3.0pt);
\draw [fill=uuuuuu] (-2.3,-0.1) circle (3.0pt);
\draw [fill=uuuuuu] (-2.2,-0.1) circle (3.0pt);
\draw [fill=white] (-2.1,-0.1) circle (3.0pt);
\draw [fill=uuuuuu] (-2.,-0.1) circle (3.0pt);
\draw [fill=white] (-1.9,-0.1) circle (3.0pt);
\draw [fill=uuuuuu] (-1.8,-0.1) circle (3.0pt);
\draw [fill=white] (-1.7,-0.1) circle (3.0pt);
\draw [fill=uuuuuu] (-1.6,-0.1) circle (3.0pt);
\draw [fill=white] (-1.5,-0.1) circle (3.0pt);
\draw [fill=uuuuuu] (-1.4,-0.1) circle (3.0pt);
\draw [fill=white] (-1.3,-0.1) circle (3.0pt);
\draw [fill=uuuuuu] (-1.2,-0.1) circle (3.0pt);
\draw [fill=white] (-1.1,-0.1) circle (3.0pt);
\draw [fill=white] (-1.,-0.1) circle (3.0pt);
\draw [fill=white] (-0.9,-0.1) circle (3.0pt);
\draw [fill=white] (-0.8,-0.1) circle (3.0pt);
\begin{scriptsize}
\draw (-2.66,-0.1) node[anchor=east]{$\eta_0^{N,*}$};
\end{scriptsize}

\foreach \i in {-9,...,9}
{
\begin{tiny}
\draw (-1.7+\i/10,-0.25) node[anchor=south]{$\i$};
\end{tiny}
}

\end{tikzpicture}
\caption{
An illustration of the configuration $\eta_0^{N,*}$ and its height function $h\{\eta_0^{N,*}\}$.
The above point indicates the lower bound for $h\{\eta_t^{N,*}\}$ given in \eqref{eq:equivr}. 
If these bounds hold, $h\{\eta_t^{N,*}\}$ is also lower bounded by the dashed blue function, since it is $1$-Lipschitz.
}  
\end{subfigure}
\caption{An illustration of the skew-time reversibility, as given by e.g. (1.4) of \cite{quastel2022convergence}.}
\label{fig:trev}
\end{figure}
\begin{proof}
Let $\bta^{N,*}=(\eta_t^{N,*})_{t\ge 0}$ be the single-species ASEP with the following initial configuration:
$\eta_0^{N,*}(x)=1$ for $x\in \llbracket-\infty, -N\rrbracket$; and $\eta_0^{N,*}(x)=0$ for $x\in \llbracket N+1,\infty\rrbracket$; and $\eta_0^{N,*}(-N-1+2x)=0$, $\eta_0^{N,*}(-N+2x)=1$, for $x\in\llbracket 1,N\rrbracket$.
Thus we have
\[
h\{\eta_0^{N,*}\}(x)=
\begin{cases}
-x, &\text{ when } x\in \llbracket-\infty, -N\rrbracket,\\
N, &\text{ when } x\in 2\llbracket 0, N\rrbracket-N,\\
N+1, &\text{ when } x\in 2\llbracket 0, N-1\rrbracket-N+1,\\
x, &\text{ when } x\in \llbracket N,\infty\rrbracket.\\
\end{cases}
\]
By skew-time reversibility of ASEP (see e.g. (1.4) of \cite{quastel2022convergence}), for any $t\ge 0$ and $b\in\R$, we have
\begin{equation} \label{eq:equivr}
\PP[h\{\zeta^0_t\}(N-2k)>N+b,\; \forall k\in \llbracket 0,N\rrbracket]=\PP[h\{\eta_t^{N,*}\}(0)>2N+b].
\end{equation}
See Figure \ref{fig:trev}. Then by Proposition \ref{prop:shinv} we have
\[
\PP[h\{\zeta^k_t\}(N-k)>N-k+b,\; \forall k\in \llbracket 0,N\rrbracket]=\PP[h\{\eta_t^{N,*}\}(0)>2N+b].    
\]
Namely, we have
\begin{equation}  \label{eq:aseped}
\min\{h\{\zeta^k_t\}(N-k)+k: k\in\llbracket 0,N\rrbracket\}-N \doeq h\{\eta_t^{N,*}\}(0)-2N,
\end{equation}
where $\doeq$ denotes equal in distribution.

Note that for any $\tau\in \R$, we have that
\[
x\mapsto - (N+\tau N^{1/3})^{-1/3} h\{\eta^{N,*}_0\}(2(N+\tau N^{1/3})^{2/3}x) + (N+\tau N^{1/3})^{-1/3}N
\]
converges (uniformly in compact sets) to $0$, as $N\to\infty$.
Then by Proposition \ref{prop:qs22}, we have that
\[
- (N+\tau N^{1/3})^{-1/3} h\{\eta^{N,*}_{2(1-q)^{-1}(N+\tau N^{1/3})}\}(0) +(N+\tau N^{1/3})^{-1/3}N + (N+\tau N^{1/3})^{2/3}
\]
converges in distribution to $\fh(1,0)$ as $N\to\infty$, for $\fh$ being the KPZ fixed point with initial data $\fh(0,\cdot)=0$.
We note that with such initial data, $x\mapsto 2^{-1/3}\fh(1,2^{2/3}x)$ is the so-called Airy$_1$ process, and $\fh(1,0)$ has the same distribution as $\sup_{x\in\R}\cA(x)-x^2$, for $\cA$ being the stationary Airy$_2$ process (see e.g. (4.15) and Example 4.20 in \cite{matetski2016kpz}).
Thus $2^{2/3}\fh(1,0)$ has GOE Tracy-Widom distribution (see e.g. (1.25) of \cite{quastel2014airy}).
By multiplying $2^{2/3}N^{-1/3}(N+\tau N^{1/3})^{1/3}$ we have that
\[
-2^{2/3}N^{-1/3}h\{\eta^{N,*}_{2(1-q)^{-1}(N+\tau N^{1/3})}\}(0)+2^{5/3}N^{2/3}+2^{2/3}\tau
\]
converges in distribution to the GOE Tracy-Widom distribution, as $N\to\infty$.
Using \eqref{eq:aseped} the conclusion follows.
\end{proof}

\subsection*{Acknowledgement}
The author would like to thank Jimmy He and Amol Aggrawal for several valuable conversations.
The research of the author is supported by the Miller Institute for Basic Research in Science, at University of California, Berkeley.
Part of this work was completed when the author was a PhD student at Princeton University, Department of Mathematics.

\bibliographystyle{plain}
\bibliography{bibliography}

\end{document}